\def\1{\mathbbm{1}}
\def\t{\tau}
\def\calQ{{\mathcal{Q}}}
\def\calM{{\mathcal{M}}}
\def\calU{{\mathcal{U}}}
\def\calT{{\mathcal{T}}}
\def\calB{{\mathcal{B}}}
\def\calG{{\mathcal{G}}}
\def\calA{{\mathcal{A}}}
\def\calC{{\mathcal{C}}}
\def\calX{{\mathcal{X}}}
\def\calK{{\mathcal{K}}}
\def\calD{{\mathcal{D}}}
\def\calP{{\mathcal{P}}}
\def\C{\mathbb C}
\def\N{\mathbb N}
\newtheorem{definition}{Definition}[section]
\newtheorem{lemma}{Lemma}[section]
\newtheorem{proposition}{Proposition}[section]
\newtheorem{corollary}{Corollary}[section]
\newtheorem{theorem}{Theorem}[section]
\theoremstyle{remark}
\newtheorem{remark}{Remark}[section]
\newtheorem{example}{Example}[section]
\sloppy \pagestyle{plain} \textwidth=16cm \textheight=22.5cm
\title{Well-poesdness and approximate controllability of neutral network systems}
\author{Y. El Gantouh and S. Hadd}
\address{Department of Mathematics, Faculty of Sciences, Ibn Zohr University, Hay Dakhla, BP8106, 80000--Agadir, Morocco; elgantouhyassine@gmail.com, s.hadd@uiz.ac.ma}
\thanks{This work has been supported by COST Action CA18232. The authors would like to thank Prof. A. Rhandi for discussing some part of this paper}
\subjclass[2010]{35F46, 93B05, 93C20}
 \keywords{neutral network systems, boundary control systems, approximate controllability, controllability rank condition}
\begin{document}
\maketitle

\renewcommand{\sectionmark}[1]{}
\begin{abstract}
In this paper, we study the concept of approximate controllability of retarded network systems of neutral type. In one hand, we reformulate such systems as free-delay boundary control systems on product spaces. On the other hand, we use the riche theory of infinite-dimensional linear systems to derive necessary and sufficient conditions for the approximate controllability. Moreover, we propose a rank condition for which we can easily verify the conditions of controllability. Our approach is mainly based on feedback theory of regular linear systems in the Salamon-Weiss sense.
 \end{abstract}

\section{Introduction}\label{intro}
The main object of this paper is to characterise the approximate controllability of the following retarded network system of neutral type and input delays
\begin{align}\label{SY1}
\begin{cases}
     \dfrac{\partial }{\partial t}\varrho^j(t,x)
    =c^j(x)\dfrac{\partial }{\partial x}\varrho^j(t,x)+q^j(x)\varrho^j(t,x) +\displaystyle\sum_{k=1}^{m}L_{jk}z^{k}(t+\cdot,\cdot),\; x\in (0,1), t\geq 0,\\
\varrho^j(0,x)= g^{j}(x),\;\; x\in (0,1), \\
      \mathsf{i}^{-}_{ij}c^{j}(1)\varrho^j(t,1)=
 \mathsf{w}_{ij}^{-}\displaystyle\sum_{k=1}^{m} \mathsf{i}^{+}_{ik}c^{k}(0)\varrho^j(t,0)
 +\sum_{l=1}^{n_0}\mathrm{k}_{il}v^{l}(t),\qquad\qquad\qquad t\geq 0,\\
      z^j(\theta,x)=\varphi^{j}(\theta,x),\; u^{j}(\theta)=\psi^{j}(\theta) ,\qquad\qquad\quad\qquad\qquad\qquad\;\;\theta\in[-r,0],x\in (0,1),\\
      \varrho^j(t,x)=\left[z^{j}(t,x)- \displaystyle\sum_{k=1}^{m}D_{jk}z^{k}(t+\cdot,\cdot)-\displaystyle\sum_{i=1}^{n}\mathsf{k}_{ij}u^{j}(t+\cdot)-\mathrm{b}_{ij}u^{j}(t)\right]
     \end{cases}
\end{align}
for $ i=1,\ldots,n$ and $ j=1,\ldots,m$. Here, $ z^j(t,x) $ represents the distribution of the material along an edge $ e_{j} $ of a graph $ \mathsf{G} $ at the point $ x $ and time $ t $, where $ \mathsf{G} $ is a finite connected graph composed by $ n \in \mathbb{N}$ vertices $ \alpha_{1}\,,\ldots,\alpha_n $, and by $ m\in \mathbb{N} $ edges $ e_{1},\,\ldots,e_m $ which are assumed to be normalized on the interval $ [0,1] $. Moreover, we assume that the nodes $\alpha_{1},\,\ldots,\alpha_n $ exhibit standard Kirchhoff type conditions to be specified later on. Moreover, $z^j(t+\cdot,x):[-r,0]\to \mathbb{C}$ is the history function of $z^j$, and $u^j(t+\cdot):[-r,0]\to \mathbb{C}$ is the history function of the control function $u^j$. We notice that system \eqref{SY1} arises as a model for linear flows in networks with fading memory.

The study of system \eqref{SY1} is motivated by the several open problems on transport network systems, which is a very active topic for many years \cite{BaPu2,BDK,BDR12,BKNR,EK,EKKNS,EHR1}. Such research activity is motivated by a broad area of their possible applications,  see, e.g. \cite{BLMCH}, and the interesting mathematical questions that arise from their analysis. For instance, several properties of the transport processes depend on the structure of the network and on the rational relations of the flow velocities, see, e.g. \cite{BaPu1,TE} and references therein.


On the other hand, neutral delay systems arise naturally in many practical mathematical models. Typical examples include communication networks, structured population models, chemical processes, tele-operation systems \cite{HV,Zhong}. The qualitative properties (existence, stability, controllability, etc.) for this class of systems have received much attention (see \cite{BP}, \cite{BH}, \cite{HNN}, \cite{HV}, \cite{Sa1}, \cite{Zhong} and references therein). For instance, different controllability results for various neutral delay systems have been established recently (see, \cite{RaS}, \cite{DKR}, \cite{SNU}, \cite{STH}). In \cite{RaS}, the authors analyze the exact null controllability of neutral systems with distributed state delay by using the moment problem approach. In \cite{DKR}, relative controllability of linear discrete systems with a single constant delay was studied using the so-called discrete delayed matrix exponential. In \cite{SNU}, the authors studied the approximate controllability of linear (continuous-time) systems with state delays via the matrix Lambert $W$ function. The robustness of approximate controllability of linear retarded systems under structured perturbations has been addressed in \cite{STH} using the so-called structured distance to non-surjectivity. However, the results established in the aforementioned works become invalid for the transport network system \eqref{SY1}, since the operators $ D=(D_{jk}), L=(L_{jk}), K_1=(\mathsf{k}_{jk}) $ and $ A_m $ are supposed to be unbounded. In fact, as we shall see in Section \ref{sec:5}, if we take $ X=L^{p}([0,1])^{m} $ as the state space then $ D,L \in\mathcal{L}(W^{1,p}([-r,0],X) ; X) $ and $ K_1\in\mathcal{L}(  W^{1,p}([-r,0],\mathbf{C}^{n});X) $.

In this paper, we study the concept of approximate controllability of boundary value problems of neutral type with a particular aim to explore new techniques and new questions for control problems of transport network systems. We formulate the problem in the framework of well-posed and regular linear systems and solve it in the operator form. To be precise, we use product spaces and operator matrices to reformulate \eqref{Sy.1} into a inhomogeneous perturbed Cauchy problem governed by an operator having a perturbed domain. This allows us to use the feedback theory of well-posed and regular linear systems to prove that this operator is a generator. Our approach allows us to easily calculate the spectrum and the resolvent operator of this generator. In this manner, necessary and sufficient conditions of approximate controllability for \eqref{Sy.1} are formulated and proved by using the feedback theory of regular linear systems and methods of functional analysis. Our main result is that, when the control space is of finite dimension, we prove that the established approximate controllability criteria are reduced to a compact rank condition given in terms of transfer functions of controlled delay systems. As we shall see in Section \ref{sec:3} our approach by transforming the neutral delay system controllability problem into approximate controllability of an abstract perturbed boundary control problem greatly facilitates analysis and offers an alternative approach for the study of controllability in terms of extensive existing knowledge of feedback theory of closed-loop systems. This establishes a framework for investigating the approximate controllability of infinite dimensional neutral delay systems with state and input delays, which may shed some light in solving the approximate controllability of concrete physical problems.

The whole article is organized as follows: we initially present a survey on well-posed and regular linear systems in the Salamon-Weiss sense; Section \ref{sec:1}. The results obtained on the well-posedness and spectral theory of boundary value problems of neutral type are discussed in Section \ref{sec:2}. Section \ref{sec:3} is devoted to state and prove the main results on approximate controllability of abstract boundary control systems of neutral type.  Finally, in Section \ref{sec:5}, we show the solvability of transport network systems of neutral type by means of our introduced framework.

\section{Some background on infinite-dimensional linear systems}
\label{sec:1}
In this section we recall some well-known results and definitions on infinite dimensional linear time-invariant systems. The reader is referred to the papers  \cite{Sa1}, \cite{WO}, \cite{WC}, \cite{WR}, \cite{WT}, which was our main reference, if more details or further references are required. For the Hilbert space or Banach space setting, the reader may also refer to \cite{Sta,TW}.

Let $ X,U,Z $ be Banach spaces such that $ Z \subset X $ with continuous dense embedding, $ A_{m}:Z \longrightarrow X $ be a closed linear (often differential) operator on $ X $ (here $D(A_{m})=Z$), and boundary linear operators $ G,M: Z\subset X\longrightarrow U$.

Consider the following boundary input-output system
\begin{align}\label{S2.Sy1}
\begin{cases}
     \dot{z}(t)=A_{m} z(t),& t\ge 0,\; z(0)=z_{0}\\
     G z(t)= u(t),& t\ge 0,\\
      y(t) = M z(t), & t\ge 0.
     \end{cases}
 \end{align}
Notice that the well-posed of the boundary input-output system \eqref{S2.Sy1} consists in finding conditions on operators $A_m, G$ and $M$ such as
\begin{align}\label{S2.1}
\Vert z(\tau)\Vert^{p}_{X}+\Vert y(\cdot)\Vert^{p}_{ L^{p}([0,\tau];U)}\leq c(\tau)\left( \Vert z_{0}\Vert^{p}_{X}+\Vert u(\cdot)\Vert^{p}_{ L^{p}([0,\tau];U)}\right).
\end{align}
for some (hence for every) $ \tau>0 $, a constant $ c(\tau)>0 $ and $ p\geq 1 $. To make these statements more clear, some hypothesis are needed.
\begin{itemize}
\item[\textbf{(A1)}] the restricted operator $ A\subset A_m $ with domain $ D(A)=\ker G $ geenrates a C$_{0} $-semigroup $ (T(t))_{t\geq 0} $ on $ X $;
\item[\textbf{(A2)}]  the boundary operator $ G $ is surjective;
\end{itemize}
According to assumptions \textbf{(A1)} and \textbf{(A2)}, for $ \mu \in\rho (A) $, the following inverse, called the Dirichlet operator,
$$
D_\mu=(G_{ \vert_{\ker ( \mu -A_{m} )}})^{-1}\in \mathcal{L}(U, D(A_m)).
$$
exists. Define the boundary control operator
$$
B= (\mu -A_{-1})D_\mu\in \mathcal{L}(U,X_{-1}),
$$
then $ B\in\mathcal{L}(U,X_{-1}) $, $ \text{Rang } B \cap X=\lbrace 0\rbrace $ and
\begin{align}\label{S2.2}
(A-A_{-1})_{\vert_{Z}}=BG,
\end{align}
since $\mu D_\mu u=A_mD_\mu u,\,u\in U$, where $ X_{-1} $ is the extrapolation space associated with $ X $ and $ A $, see for more details \cite{EN}. We mention that the operator $ B $ is independent of $ \mu $ due to the resolvent equation. By virtue of formula \eqref{S2.2} the boundary input-output system \eqref{S2.Sy1} can be reformulated as the following distributed-parameter system
\begin{align}\label{S2.Sy2}
\begin{cases}
     \dot{z}(t)= A_{-1} z(t)+Bu(t),& t\geq 0,\; z(0)=z_{0},\\
      y(t) = C z(t),&t\geq 0,
\end{cases}
 \end{align}
where $$C=M_{\vert_{D(A)}}.$$
Then the state of the system \eqref{S2.Sy2} satisfy the variation of constants formula
\begin{align}\label{S2.4}
z(t;z_{0},u)=T(t)z_{0}+\int_{0}^{t}T_{-1}(t-s)Bu(s)ds, \; \; t\geq 0,
\end{align}
for all $ z_0\in X $ and $ u\in L^{p}([0,+\infty);U) $. Notice that the integral in \eqref{S2.4} is taken in the large space $ X_{-1} $. Thus, we need a class of control operators $ B $ for which the state of the system \eqref{S2.Sy2} takes values in the state space $X$. This motivated the following definition.
\begin{definition}\label{D1.S2}
An operator $B\in\mathcal{L}(U,X_{-1})$ is called an admissible control operator for $A$, if for some $\tau >0 $
\begin{align*}
\Phi_{\tau}u:=\int_{0}^{\tau}T_{-1}(\tau-s)Bu(s)ds,
\end{align*}
takes values in $X$ for any $ u\in L^{p}([0,+\infty);U)  $.
\end{definition}
Note that the admissibility of $ B $ implies that the state of the system \eqref{S2.Sy2} is a continuous $ X $-valued function of $ t $ and satisfy
\begin{align}\label{S2.5}
z(t)=T(t)z_{0}+\Phi_{t}u,
\end{align}
for all $ z_{0}\in X $ and $ u\in L^{p}_{loc}([0,\infty];U) $. Moreover, if $u \in L^{p}_{\alpha}([0,\infty);U) $ for some $ \alpha>w_{0}(T) $ (where $ L^{p}_{\alpha}([0,\infty),U) $ denote the space of all the functions of the form $ u(t)=e^{\alpha t}v(t) $, where $ v\in L^{p}([0,\infty),U) $). Then $ u $ and $ z $ from \eqref{S2.5} have Laplace transforms related by
\begin{align}\label{S2.6}
\hat{z}(\mu)=R(\mu,A)z_0+\widehat{\Phi_\bullet u}(\mu),\;\;\; \text{ with }\;\;\; \widehat{\Phi_\bullet u}(\mu)=D_\mu \hat{u}(\mu),\;\; \forall\; \Re e\,\mu> \alpha,
\end{align}
where $ \alpha\in \mathbb{R} $ and $\hat{u}$ denote the Laplace transform of $u$.

On the other hand, if the control function $u$ is smooth enough, let say
$$ u\in W^{2,p}_{0,loc}([0,+\infty),U):=\left\{u\in W^{2,p}_{loc}([0,+\infty),U):u(0)=u'(0)=0\right\} .$$
It follows that $ \Phi_{t}u\in Z$ for any $t\ge 0$. It makes sense to define the linear operator
\begin{align}\label{S2.7}
(\mathbb{F} u)(t)=M\Phi_t u,\;\; t\geq 0, \;\;  u\in W^{2,p}_{loc}([0,\infty);U).
\end{align}
With these notations, it follows that
$$y(t)=\Psi z_{0}+\mathbb{F} u, \;\; \text{ for any } (z_0,u)\in D(A)\times W^{2,p}_{loc}([0,\infty);U),  $$
where $$ \Psi z_0=CT(.)z_0 ,\;\; z_0\in D(A) .$$
 So, according to formula \eqref{S2.1}, we are seeking for an output function in $ L^{p}_{loc}([0,\infty];U) $ for any  $ (z_0,u) \in X \times L^{p}_{loc}([0,\infty];U) $.  As a matter of fact, the previous property may not hold for unbounded operator $ C $. In order to overcome this obstacle we first define the following class of operators $ C $.
\begin{definition}\label{S2.D2}
An operator $ C\in\mathcal{L}(D(A),U)$ is called an admissible observation operator for $ A $ if
\begin{align}\label{S2.8}
\int_{0}^{\tau} \Vert CT(s)z\Vert^{p}ds\leq \gamma^{p}\Vert z\Vert^{p},
\end{align}
for all $ z\in D(A) $, $ p\in[1,\infty) $ and a constant $ \gamma:=\gamma (\tau)> 0 $ with $ \tau\geq 0 $.
\end{definition}
In particular, for an admissible observation operator $ C $ the map $\Psi$ from $D(A)$ to $ L^{p}_{loc}([0,\infty);U)$ can be extended to a linear bounded operator $ \Psi:X\longrightarrow L^{p}_{loc}([0,\infty);U) $. Moreover, as shown by Weiss \cite{WO}, one can associate with the operator $ C\in\mathcal{L}(D(A),U)$ the following operator
\begin{align*}
 C_{\Lambda}z:=\lim_{\mu \longrightarrow \infty} C\mu  R(\mu ,A)z,
\end{align*}
whose domain $D(C_{\Lambda})$ consists of all $ z_0\in X $ for which the limit exists, called the $ \Lambda $-extension of $ C $ for $ A $. The introduced operator
 makes possible to give a simple pointwise interpretation of the output map $ \Psi $ in terms of the observation operator $ C $. Moreover, for an admissible observation operator $C $ we have $ T(t)z\in D(C_{\Lambda}) $ for a.e $ t\geq 0 $ and
$$
\Psi z:= C_{\Lambda}T(.)z.
$$
\begin{definition}\label{S2.D3}
Let $B \in\mathcal{L}(U,X_{-1})$ and $C\in \mathcal{L}(D(A),U)$ be admissible control and observation operator for $ A $, respectively. We call the triplet $ (A,B,C ) $ (or equivalently the system \eqref{S2.Sy2}) a well-posed state-space operators on $ U, X, U $, if for every $ \tau >0 $ there exits $ \kappa=\kappa(\tau) $ such as
\begin{align*}
\Vert \mathbb{F} u\Vert_{L^{p}([0,\tau],U)}\leq \kappa \Vert u\Vert_{L^{p}([0,\tau],U)},\qquad u\in W^{2,p}_{0,loc}([0,+\infty),U).
\end{align*}
\end{definition}
Now, for $ \tau\geq 0 $, we define the input-output maps of $ (A,B,C) $, denoted by $ \mathbb{F}_{\tau} $, by truncating the output to $ [0,\tau] $:
\begin{align*}
\mathbb{F}_{\tau}u=(\mathbb{F}u)_{\vert_{[0,\tau]}}.
\end{align*}
In particular, the feedback law $ u=y $ has a sense if only if $ (I-\mathbb{F})u=\Psi z_0 $ has a unique solution $ u\in L^{p}([0,\tau],U) $ for some $ \tau>0 $. This is true if $ I-\mathbb{F} $ is invertible in $ L^{p}([0,\tau],U) $. In this case, the identity $ I:U\longrightarrow U $ is called an admissible feedback for $ (A,B,C) $.

A more appropriate subclass of well-posed state-space operators is defined by:
\begin{definition}\label{S2.D4}
Let $ (A,B,C) $ a well-posed state-space operators on $ U,X,U $. Then, the triplet $ (A,B,C) $ is called regular state-space operators (with feedthrough zero) if for any $ v\in U $, we have
\begin{align*}
\lim_{\tau\longmapsto 0}\int_{0}^{\tau}(\mathbb{F}(\1_{\mathbb{R}_{+}} \cdot v))(\sigma)d\sigma=0.
\end{align*}

\end{definition}
Accordingly, the operator
$$
 A^{K}=A_{-1}+BKC_{\Lambda},\; \; D(A^{K})=\{z\in D(C_{\Lambda}): (A_{-1}+BKC_{\Lambda})z\in X\}
$$
generates a strongly continuous semigroup $T^{K}:=(T^{K}(t))_{t\geq 0}$ on $ X $ such that $ T^{K}(t)z \in D(C_{\Lambda}) $ for all $ z\in X $ and a.e $t\geq 0  $. Moreover, we have
$$
  T^{K}(t)z=T(t)z+\int_{0}^{t}T_{-1}(t-s)BKC_{\Lambda}T^{K}(s)zds
$$
 for all $ z\in X $ and $t\geq 0  $. For more details and references (see e.g. \cite{WR} and \cite[Chap.7]{Sta}).

\section{Wellposedness of boundary value problems of neutral type}\label{sec:2}
In this section, we investigate the wellposedness of the abstract boundary control systems of neutral type described as
\begin{align}\label{Sy.1}
\begin{cases}
\dfrac{d}{dt}(z(t)-Dz_{t}-K_{0}u(t)-K_{1}u_{t})\\
=A_m(z(t)-Dz_{t}-K_{0}u(t)-K_{1}u_{t})+Lz_{t}+B_{0}u(t)+B_{1}u_{t},& t\geq 0,\\
\underset{t\longrightarrow 0}{\lim}(z(t)-Dz_{t}-K_{0}u(t)-K_{1}u_{t})=\varrho_{0},\\
G(z(t)-Dz_{t}-K_{0}u(t)-K_{1}u_{t})
=M(z(t)-Dz_{t}-K_{0}u(t)-K_{1}u_{t})+Kv(t),& t\geq 0, \\
z_{0}=\varphi,\quad u_{0}=\psi,
\end{cases}
\end{align}
where the state variable $ z(.) $ takes values in a Banach space $ X $ and the control functions $ u(.),v(.) $ are given in the Banach space $ L^{p}_{loc}([0,\infty);U)$, where $ U $ is also a Banach space. $ K_{0},B_{0}$ are bounded linear operator from $ U $ to $ X $, whereas $ K$ is a boundary control operator from $ U $ to the Banach space $ \partial X $. $A_m:D(A_m)\subset X\longrightarrow X$ is a closed, linear differential operator and $ G,M: D(A_m)\longrightarrow\partial X$ are unbounded trace operators. The delay operators $ D,L: W^{1,p}([-r,0],X) \longrightarrow X$ and $ K_{1},B_{1}: W^{1,p}([-r,0],U)\longrightarrow X $ are defined by
\begin{align*}
D\varphi &=\int_{-r}^{0}d\eta(\theta)\varphi(\theta),& L\varphi&=\int_{-r}^{0}d\gamma(\theta)\varphi(\theta),\\
 B_{1}\psi &=\int_{-r}^{0}d\nu(\theta)\psi(\theta),& K_{1}\psi&=\int_{-r}^{0}d\vartheta(\theta)\psi(\theta),
\end{align*}
for $ \varphi\in W^{1,p}([-r,0],X) $ and $ \psi\in W^{1,p}([-r,0],U) $, where $ \eta,\gamma:[-r,0]\longrightarrow \mathcal{L}(X) $ and $\nu,\vartheta:[-r,0]\longrightarrow \mathcal{L}(U,X) $ are functions of bounded variations with total variations $ \vert\eta\vert([-\varepsilon,0])$, $\vert\gamma\vert([-\varepsilon,0]) $, $ \vert\nu\vert([-\varepsilon,0]), \text{ and }\vert\vartheta\vert([-\varepsilon,0])$ approach $ 0 $ as $ \varepsilon\longrightarrow 0 $. The notation $ x_{t} $ (resp. $ u_{t} $) represents the history function defined by $ x_{t}:[-r,0]\longrightarrow X $, $ x_{t}(\theta)=x(t+\theta) $ (resp. $ u_{t}(\theta)=u(t+\theta) $) for $ t\geq 0 $ and $ \theta\in [-r,0]  $ with delay $ r> 0 $. The functions $ z_{0}=\varphi, u_{0}=\psi $ are the initial history functions of $ z(.) $ and $ u(.) $, respectively. Some new notation is needed.
Let $ E $ be a Banach sapce and $ r>0 $ be a real number. Define the operator
$$
Q_{m}^{E}\xi = \frac{\partial}{\partial \theta}\xi, \;\;  D(Q_{m}^{E})= W^{1,p}([-r,0],E),
$$
and
$$
Q^{E}\xi = \frac{\partial}{\partial \theta}\xi, \;\;  D(Q^{E})= \lbrace \xi\in W^{1,p}([-r,0],E): \xi (0)=0\rbrace.
$$
It is well known that $ (Q^{E},D(Q^{E})) $ generate the left shift semigroup
$$
 (S^{E}(t)\xi)(\theta)=
\begin{cases}
0,\qquad \quad t+\theta\geq 0,
\\
\xi(t+\theta),\;t+\theta\leq 0,
\end{cases}
$$
for $ t\geq 0 $ and $ \theta\in [-r,0] $ and $ \xi\in L^{p}([-r,0],E) $; see \cite{EN}. For $ \mu \in\mathbf{C} $, we define the operator $ e_{\mu} $ as
\begin{align*}
e_{\mu}^{E}: E\longrightarrow L^{p}([-r,0],E),\;\; (e_{\mu}^{E}z)(\theta)=e^{\mu\theta}z,\;\;  z\in E,\; \theta\in [-r,0].
\end{align*}
Denote by $ Q_{-1}^{E} $ the extension of $ Q^{E} $ in the extrapolation sense and define the operator
\begin{align*}
\beta^{E}:= -Q_{-1}^{E}e_{0}.
\end{align*}
Then the function $  z_t(.) $ is the solution of the following boundary equation
\begin{align}\label{Sy.shift}
\left\lbrace
\begin{array}{lll}
\dot{v}(t,\theta)= Q^{E}_m v(t,\theta),\quad\qquad\quad t> 0,\;  \theta\in [-r,0],  \\
v(t,0)= z(t),\; \;  v(0,.)=\xi, \quad t\geq 0.
\end{array}
\right.
\end{align}
Moreover, for any $ \xi\in L^{p}([-r,0];E) $ and $ z\in L^{p}([-r,\infty);E) $ with $ z_0=\xi $, $ z_t $ is given by
\begin{align*}
z_t=S^{E}(t)\xi+\Phi^{E}_t z,\; \; t\geq 0,
\end{align*}
where $ \Phi_{t}: L^{p}([0,\infty);E) \longrightarrow  L^{p}([-r,0];E) $ are the linear operators defined by
$$
 (\Phi^{E}(t)z)(\theta)=
\begin{cases}
z(t+\theta),\;t+\theta\geq 0,
\\
0,\qquad \quad  t+\theta\leq 0,
\end{cases}
$$
for $ t\geq 0 $, $ z\in L^{p}([0,\infty),E) $ and $ \theta\in [-r,0] $; see \cite{HIR}.

Next we focus our attention to study the well-posedness of the neutral delay system \eqref{Sy.1} with no control input (i.e., $ B_0\equiv 0,K_0\equiv 0$ and $K\equiv 0 $).  For that purpose we introduce the Banach spaces
\begin{align*}
\mathcal{X} &:=X\times  L^{p}([-r,0],X)\times  L^{p}([-r,0],U),\\
\mathcal{Z} &:= D(A_m)\times W^{1,p}([-r,0],X)\times  W^{1,p}([-r,0],U),\\
\mathcal{U}&:=\partial X \times X\times U,
\end{align*}
equipped with their usual norms. Moreover, if we set
$$ \varrho(t)=z(t)-Dz_{t}-K_{0}u(t)-K_{1}u_{t}, $$
and using \eqref{Sy.shift} together with the function
\begin{align*}
t\longmapsto \zeta(t)=\left(\begin{smallmatrix}
z(t)\\
z_t\\
u_t
\end{smallmatrix}\right)
\end{align*}
one can see that the neutral delay system \eqref{Sy.1} can be rewrite as the following perturbed Cauchy problem
\begin{align}\label{S3.1.Sy1}
\begin{cases}
\dot{\zeta}(t)=[\mathcal{A}_{G,M}+\mathcal{P}]\zeta(t),&  t\geq 0,\\
\zeta(0)=(\varrho_0,\varphi,\psi)^{\top},&
\end{cases}
\end{align}
where $ \mathcal{A}_{G,M} $ and $ \mathcal{P} $ are linear operators on $ \mathcal{X} $ defined by
\begin{align*}
\mathcal{A}_{G,M}&:=\left(\begin{smallmatrix}
A_m & L & B_1\\
0 & Q_{m}^{X} & 0\\
0 & 0 & Q_{m}^{U}
\end{smallmatrix}\right),& D(\mathcal{A}_{G,M })&:=\left\{\left(\begin{smallmatrix}\varrho_0\\ \varphi\\ \psi\end{smallmatrix}\right)\in \mathcal{Z} :\; \begin{smallmatrix}
 G\varrho_0=M\varrho_0\\ \varphi(0)=\varrho_0+D\varphi+K_1 \psi
 \end{smallmatrix}\right\}\\
 \mathcal{P}&:=\left(\begin{smallmatrix}
0 & L & B_1\\
0 & 0 & 0\\
0 & 0 & 0
\end{smallmatrix}\right), & D(\mathcal{P})&:=\mathcal{Z}.
\end{align*}

We focus our attention now to study the well-posedness of the perturbed Cauchy problem \eqref{S3.1.Sy1}. To do this, we first claim that the operator $ \mathcal{A}_{G,M} $ is a generator on $ \mathcal{X} $. Second, we show that $ \mathcal{P} $ is a Miyadera-Voigt perturbation for $ \mathcal{A}_{G,M} $. Let assume that the boundary operator $ G $ satisfies the assumptions \textbf{(A1)} and \textbf{(A2)} (see Section \ref{sec:1}). On the other hand, we set
$$
C=M_{\vert_{D(A)}}.
$$
We also assume that
\begin{itemize}
\item[\textbf{(A3)}] the triple operator $ (A,B,C) $ is a regular state-space operators on $ \partial X,X, \partial X $ with the identity operator $ I_{\partial X} $ as admissible feedback operator.
\end{itemize}

We have the following result.
\begin{proposition}\label{S3.P1}
Under the assumptions \textbf{(A1)}-\textbf{(A3)}, the operator $ ( \mathcal{A}_{G,M},D(\mathcal{A}_{G,M})) $ generates a strongly continuous semigroup
$ (\mathcal{T}_{G,M}(t))_{t\geq 0} $ on $ \mathcal{X} $.
\end{proposition}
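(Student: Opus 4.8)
The plan is to recognise $\calA_{G,M}$ as the generator produced by closing a regular linear system in feedback, exactly along the lines of the last paragraph of Section~\ref{sec:1}: once a triple $(\calA,\calB,\calC)$ on $\calX$ is regular and admits the identity as an admissible feedback, the operator $\calA_{-1}+\calB\,\calC_{\Lambda}$ generates a $C_{0}$-semigroup. The entire task is therefore to choose a decoupled reference generator, a boundary control operator, and an observation operator so that the closed loop reproduces both the action and the domain of $\calA_{G,M}$, the coupling in the domain being precisely what the identity feedback encodes.

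First I would fix the reference generator. Put $A=A_{m}|_{\ker G}$, which generates a semigroup $(T(t))_{t\ge 0}$ by \textbf{(A1)}, and let $Q^{X},Q^{U}$ be the generators of the nilpotent left shifts $S^{X}(t),S^{U}(t)$; since the input-history block receives no fresh input, it evolves by the nilpotent shift, so the relevant boundary relation there is the homogeneous one. Then $\calA:=\mathrm{diag}(A,Q^{X},Q^{U})$ generates the product semigroup $\mathrm{diag}(T(t),S^{X}(t),S^{U}(t))$ on $\calX$, with $D(\calA)=\ker\calG$ for the boundary map $\calG\zeta=(G\varrho_{0},\varphi(0),\psi(0))$, which is surjective onto $\calU$ by \textbf{(A2)}. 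The off-diagonal bounded-variation terms $L\varphi$ and $B_{1}\psi$ feeding the first coordinate are bounded from $\calZ$ into $X$ and, since $\gamma,\nu$ have total variation vanishing near $0$, are relatively $\calA$-bounded; I would carry them along as a Miyadera--Voigt perturbation, handled together with $\calP$ at the stage announced after the statement, so that the core generation result rests on the boundary coupling alone.

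Next I would assemble the control and observation operators. Using the Dirichlet operators $D_{\mu}=(G|_{\ker(\mu-A_{m})})^{-1}$ and $e^{X}_{\mu},e^{U}_{\mu}$, set $B=(\mu-A_{-1})D_{\mu}$ and $\beta^{X}=-Q^{X}_{-1}e^{X}_{0}$, $\beta^{U}=-Q^{U}_{-1}e^{U}_{0}$, and let $\calB=\mathrm{diag}(B,\beta^{X},\beta^{U})\in\mathcal{L}(\calU,\calX_{-1})$. Each diagonal entry is admissible: $B$ by \textbf{(A1)}--\textbf{(A2)}, and $\beta^{X},\beta^{U}$ because the input maps $\Phi^{X}_{t},\Phi^{U}_{t}$ displayed before \eqref{Sy.1} are bounded into the history spaces. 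For the observation I would take
\[
\calC\begin{pmatrix}\varrho_{0}\\ \varphi\\ \psi\end{pmatrix}
=\begin{pmatrix}M\varrho_{0}\\ \varrho_{0}+D\varphi+K_{1}\psi\\ 0\end{pmatrix},
\qquad (\varrho_{0},\varphi,\psi)^{\top}\in D(\calA),
\]
whose first entry is admissible (indeed regular) by \textbf{(A3)}, while the delay functionals $\varphi\mapsto D\varphi$ and $\psi\mapsto K_{1}\psi$ are admissible observation operators for the shift precisely because $\eta,\vartheta$ have vanishing total variation near $0$; the trivial entry $\varrho_{0}\mapsto\varrho_{0}$ is bounded.

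Finally I would verify that $(\calA,\calB,\calC)$ is regular with the identity as admissible feedback, and identify the closed loop. Equating the boundary values $G\varrho_{0},\varphi(0),\psi(0)$ with $\calC\zeta$ returns exactly $G\varrho_{0}=M\varrho_{0}$ and $\varphi(0)=\varrho_{0}+D\varphi+K_{1}\psi$ on the first two coordinates, together with $\psi(0)=0$ on the third (the condition that turns the input-history block into the generator $Q^{U}$ of the nilpotent shift); hence $\calA_{-1}+\calB\,\calC_{\Lambda}$ coincides with $(\calA_{G,M},D(\calA_{G,M}))$, and the feedback theorem of Section~\ref{sec:1} delivers the semigroup $(\calT_{G,M}(t))_{t\ge 0}$. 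The main obstacle is this last verification. Because the coupling is \emph{lower triangular} --- the first state $\varrho_{0}$ and the outputs $D\varphi,K_{1}\psi$ feed only the second boundary relation and never the first --- the closed-loop input--output map $\mathbb{F}$ is a strictly causal, triangular perturbation of the regular system furnished by \textbf{(A3)}, so that $I-\mathbb{F}$ should be boundedly invertible on $L^{p}([0,\tau],\calU)$ by a Volterra/Neumann argument. Making this rigorous while $D,L,K_{1},B_{1}$ are only defined on $W^{1,p}$ --- that is, controlling the $\Lambda$-extensions and the regularity of the coupled transfer function --- is where the real work lies.
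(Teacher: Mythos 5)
Your proposal is correct and follows essentially the same route as the paper: the paper likewise recasts the domain coupling as the boundary relation $\mathcal{G}\zeta=\mathcal{M}\zeta$, takes the diagonal reference generator $\mathcal{A}=\mathrm{diag}(A,Q^{X},Q^{U})$ with $D(\mathcal{A})=\ker\mathcal{G}$, the control operator $\mathbb{B}=\mathrm{diag}(B,\beta^{X},\beta^{U})$ and the observation $\mathcal{C}=\mathcal{M}_{\vert_{D(\mathcal{A})}}$ (your $(M\varrho_{0},\,\varrho_{0}+D\varphi+K_{1}\psi,\,0)^{\top}$, including the implicit third condition $\psi(0)=0$), deferring $L,B_{1}$ to the Miyadera--Voigt step of Theorem \ref{S3.T1}, and concludes by the closed-loop generation theorem \cite[Theorem 4.1]{HMR}. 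Even the step you flag as the real work is resolved in the paper exactly as you anticipate: $\mathbb{F}_{\tau}$ is block lower triangular, so invertibility of $I_{\mathcal{U}}-\mathbb{F}_{\tau}$ reduces to the diagonal blocks, which are handled by \textbf{(A3)} and the regularity (with identity admissible feedback) of the delay triples $(Q^{X},\beta^{X},D)$ and $(Q^{U},\beta^{U},K_{1})$ from \cite{HIR}.
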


\begin{proof}
To prove our claim we shall use \cite[Theorem 4.1]{HMR}. To this end, we define the operators $ \mathcal{G},\mathcal{M}:\mathcal{Z}\longrightarrow \mathcal{ U}$ by
$$ \mathcal{G}=\left(\begin{smallmatrix}
G & 0 & 0\\
0 & \delta_{0} & 0\\
0 & 0 & \delta_{0}
\end{smallmatrix}\right),\;\;\mathcal{M}=\left(\begin{smallmatrix}
M & 0 & 0 \\
I & D & K_{1}\\
0 & 0 & 0
\end{smallmatrix}\right).$$
Then, we can rewrite the domain of $ \mathcal{A}_{G,M} $ as
\begin{align*}
D(\mathcal{A}_{G,M })&:=\left\{\left(\begin{smallmatrix}\varrho_0\\ \varphi\\ \psi\end{smallmatrix}\right)\in \mathcal{Z} :\; \mathcal{G}\left(\begin{smallmatrix}\varrho_0\\ \varphi\\ \psi\end{smallmatrix}\right)=\mathcal{M}\left(\begin{smallmatrix}\varrho_0\\ \varphi\\ \psi\end{smallmatrix}\right)\right\}
\end{align*}
Clearly, by virtue of the assumptions \textbf{(A1)} and \textbf{(A2)}, the operator $ \mathcal{G} $ is surjective and the operator $\mathcal{A}:=\mathcal{A}_{m}$ with $ D(\mathcal{A}):= \ker \mathcal{G}$ generates a diagonal C$_{0}$-semigroup $ (\mathcal{T}(t))_{t\geq 0} $ on $ \mathcal{X} $. On the other hand, if $ D_\mu $ ( $ \mu \in \rho(A)) $, then
\begin{align}\label{Bneu}
\mathcal{D}_{\mu}=\left(\begin{smallmatrix}
D_\mu & 0& 0\\
0 & e_{\mu}^{X}& 0\\
0 & 0 & e_{\mu}^{U}
\end{smallmatrix}\right),
\; \; \; \; \mathbb{B}=\left(\begin{smallmatrix}
B& 0 & 0 \\
0 & \beta^{X} & 0\\
0 & 0 & \beta^{U}
\end{smallmatrix}\right).
\end{align}
We know from \cite[Sect. 3]{HIR} that $ \beta^{X},\beta^{U} $ are admissible control operators for $ Q^X,Q^U $, respectively, and by assumption \textbf{(A3)} $ B $ is admissible for $ A $. Thus the operator $ \mathbb{B} $ is an admissible control operator for $ \mathcal{A} $. Define the operator
\begin{align*}
\mathcal{C}:=\mathcal{M}_{\vert_{D(\mathcal{A})}}.
\end{align*}
We know from \cite[Theorem 3]{HIR} that $ D, K_{1} $ are admissible observation operators for $ Q^X,Q^U $, respectively,  and by assumption \textbf{(A3)} $ C $ is admissible for $ A $. It follows that the operator $ \mathcal{C} $ is an admissible observation operator for $ \mathcal{A} $. We now prove that the triple $ (\mathcal{A},\mathbb{B},\mathcal{C}) $ is regular with identity operator $ I_{\mathcal{U}} $ as an admissible
feedback. According to \cite[Theorem 4.2.1]{Sta}, the control maps associated to $ \mathbb{B} $ are given by
$$
\mathrm{\Phi}_{t}
\left(\begin{smallmatrix}
v \\u\\u'
\end{smallmatrix}\right)=
\left(\begin{smallmatrix}
\int_0^t T_{-1}(t-s)Bv(s)ds\\
\int_0^t S_{-1}^{X}(t-s)\beta^{X}u(s)ds\\
\int_0^t S_{-1}^{U}(t-s)\beta^{U}u'(s)ds
\end{smallmatrix}\right),\; \; \; t\geq 0,\; \;
\left(\begin{smallmatrix}
v \\u\\u'
\end{smallmatrix}\right)\in L^{p}([0,+\infty);\mathcal{U}).
$$
Let denote by $ D_{\Lambda},K_{1,\Lambda},C_{\Lambda} $ the Yosida extension of $ D, K_1 , C $ with respect to $ Q^{X},Q^{U} $ and $ A $, respectively. Then, the Yosida extension of $ \mathcal{C} $ with respect $ \mathcal{A} $ is given by
\begin{align*}
\mathcal{C}_{\Lambda} =\left(\begin{smallmatrix}
C_{\Lambda} & 0 & 0 \\
I & D_{\Lambda} & K_{1,\Lambda}\\
0 & 0 & 0
\end{smallmatrix}\right), \;\; D(\mathcal{C}_{\Lambda})=D(\mathcal{C}_{\Lambda})\times D( D_{\Lambda}) \times D(K_{1,\Lambda}).
\end{align*}
Moreover, according to \cite[Theorem 3]{HIR}, the triples $ (Q^{X},\beta^{X},D) $ and $ (Q^{U},\beta^{U},K_1 ) $ are regular state-space operators. This with the assumption \textbf{(A3)} implies that $ \text{Rang }\mathrm{\Phi}_{t} \subset D(\mathcal{C}_{\Lambda})$ for a.e. $ t\geq 0 $, cf. \cite[Theorem 5.6.5]{Sta}. We then select
\begin{align*}
\left(\mathbb{F}_{\tau}\left(\begin{smallmatrix}
v \\u\\u'
\end{smallmatrix}\right)\right)(t):=\mathcal{C}_{\Lambda}\mathrm{\Phi}_{t}\left(\begin{smallmatrix}
v\\u\\u'
\end{smallmatrix}\right),\; \; t\in [0,\tau],\; \left(\begin{smallmatrix}
v \\u\\u'
\end{smallmatrix}\right)\in L^{p}([0,+\infty);\mathcal{U}).
\end{align*}
With this it is not difficult to see that the triple $ (\mathcal{A},\mathbb{B},\mathcal{C} )$ is well-posed state-space operators. Moreover, as $R(\mu,\mathcal{A}_{-1})\mathbb{B}=\mathcal{D}_\mu$ for $\mu\in\rho(A),$ we have 
\begin{align*}
{\rm Range}\left(R(\mu,\mathcal{A}_{-1})\mathbb{B}\right)\subset D(C_{\Lambda})\times D( D_{\Lambda}) \times D(K_{1,\Lambda})=D(\mathcal{C}_{\Lambda}),
\end{align*}
for $ \mu \in \rho(\mathcal{A}) $ and $ \begin{pmatrix}z& x & x'\end{pmatrix}^{\top} \in \mathcal{U}$, due to \textbf{(A3)} and the fact that the triples $ (Q^{X},\beta^{X},D) $ and $ (Q^{U},\beta^{U},K_1 ) $ are regular state-space operators as shown in \cite[Theorem 3]{HIR}. Hence $ (\mathcal{A},\mathbf{B},\mathcal{C} )$ is regular regular state-space operators on $ \mathcal{U},\mathcal{X},\mathcal{U} $. We now prove that identity operator $ I_\mathcal{U} $ is an admissible feedback. Clearly, we can write
\begin{align*}
\mathbb{F}_{\tau}=\left(\begin{smallmatrix}
\mathbb{F}_{\tau}^{A,C}& 0 & 0\\
\Phi_{t}^{A} & \mathbb{F}_{\tau}^{Q^{X},D}& \mathbb{F}_{\tau}^{Q^{U},K_1}\\
0 & 0 & 0
\end{smallmatrix}\right)
\end{align*}
with
\begin{align*}
 (\mathbb{F}_{\tau}^{Q^{X},D}u)(t)&=D_{\Lambda}\Phi_{t}^{X} u,\;\; \;\; (\mathbb{F}_{\tau}^{A,C}v)(t)=C_{\Lambda}\Phi_{t}^{A}v
,\\(\mathbb{F}_{\tau}^{Q^{U},K_1}u')(t)&=K_{1,\Lambda}\Phi_{t}^{U} u',
\end{align*}
for $ t\in[0,\tau] $ and $ \left(\begin{smallmatrix}
v\\u\\u'
\end{smallmatrix}\right)\in L^{p}([0,+\infty);\mathcal{U}) $. Thus, $ I_{\mathcal{U}}-\mathbb{F}_{\tau_0} $ is invertible in $ L^{p}([0,\tau_0];\mathcal{U})  $, due to assumption \textbf{(A3)} and the fact that the triples $ (Q^{X},\beta^{X},D) $ is regular state-space operators with $ I_X $ as admissible feedback, cf. \cite{HIR}. Hence, by \cite[Theorem 4.1]{HMR} the operator $\mathcal{A}_{G,M} $ generates a strongly continuous semigroup on $ \mathcal{X} $. This ends the proof.
\end{proof}

The fact that the perturbed Cauchy problem \eqref{S3.1.Sy1} is well-posed follows from the following result:
\begin{theorem}\label{S3.T1}
The operator $ \mathfrak{A}:=\mathcal{A}_{G,M}+\calP $ generates a strongly continuous semigroup $ (\mathfrak{U}(t))_{t\geq 0} $ on $ \mathcal{X} $ satisfying $ \mathfrak{U}(t)\zeta\in D(\calC_{\Lambda})\cap D(\mathbb{P}_{\Lambda}) $ for all $ \zeta \in \calX $ and and almost every $ t\geq 0 $ and
\begin{align*}
\mathfrak{U}(t)&=\calT_{G,M}(t)(t) +\int_{0}^{t} \calT_{G,M}(t)(t-s)\calP \mathfrak{U}(s) ds,\; \; \; \text{on }D(\mathcal{A}_{G,M}),\\
\calT_{G,M}(t)&=\calT(t)+\int_{0}^{t}\calT_{-1}(t-s)\mathbb{B}\calC_{\Lambda}\calT_{G,M}(s)ds,\; \; \;\; \; \; \; \;\; \text{on } \calX.
\end{align*}
\end{theorem}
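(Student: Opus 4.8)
The plan is to exhibit $\mathcal{P}$ as a Miyadera--Voigt perturbation of the generator $\mathcal{A}_{G,M}$ produced by Proposition \ref{S3.P1}, and then to read off the two variation-of-constants identities from the resulting perturbation series together with the closed-loop representation of $\mathcal{T}_{G,M}$ already established there. The first thing to notice is that $\mathcal{P}$ is purely of observation type: it sends $(\varrho_0,\varphi,\psi)^{\top}$ to $(L\varphi+B_1\psi,0,0)^{\top}$, and its only nonzero entries are the delay operators $L$ and $B_1$. Since $L$ and $B_1$ have exactly the same integral-against-bounded-variation form as $D$ and $K_1$ (with $L\leftrightarrow\gamma$ and $B_1\leftrightarrow\nu$), the results of \cite{HIR} already invoked in Proposition \ref{S3.P1} apply verbatim: $L$ is an admissible observation operator for $Q^{X}$ and $B_1$ for $Q^{U}$. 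Consequently $\mathcal{P}$ admits a Yosida extension $\mathbb{P}_{\Lambda}$, built from the extensions $L_{\Lambda}$ and $B_{1,\Lambda}$, which is an admissible observation operator for the diagonal generator $\mathcal{A}$.

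Next I would observe that for $\zeta\in D(\mathcal{A}_{G,M})\subset\mathcal{Z}=D(\mathcal{P})$ the orbit $\mathcal{T}_{G,M}(s)\zeta$ remains in $\mathcal{Z}$, so $\mathcal{P}\,\mathcal{T}_{G,M}(s)\zeta$ is classically defined and the Miyadera integral below makes literal sense. To locate the orbits of the perturbed semigroup for \emph{all} initial data I would instead use that, because $\mathcal{T}_{G,M}$ was constructed in Proposition \ref{S3.P1} as the closed loop of the regular triple $(\mathcal{A},\mathbb{B},\mathcal{C})$ under the admissible feedback $I_{\mathcal{U}}$, admissibility of observation operators is preserved (cf. \cite[Chap.~7]{Sta}); hence $\mathbb{P}_{\Lambda}$ is also admissible for $\mathcal{A}_{G,M}$ and $\mathcal{T}_{G,M}(s)\zeta\in D(\mathbb{P}_{\Lambda})$ for a.e.\ $s\ge 0$ and every $\zeta\in\mathcal{X}$. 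I also record that the second identity claimed in the statement, $\mathcal{T}_{G,M}(t)=\mathcal{T}(t)+\int_{0}^{t}\mathcal{T}_{-1}(t-s)\mathbb{B}\,\mathcal{C}_{\Lambda}\mathcal{T}_{G,M}(s)\,ds$, is precisely the closed-loop representation of $\mathcal{T}_{G,M}$ derived in Proposition \ref{S3.P1}, so it requires no new argument.

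The crux is the Miyadera smallness estimate: I must produce $\tau>0$ and $q\in[0,1)$ such that $\int_{0}^{\tau}\|\mathcal{P}\,\mathcal{T}_{G,M}(s)\zeta\|_{\mathcal{X}}\,ds\le q\,\|\zeta\|_{\mathcal{X}}$ for all $\zeta\in D(\mathcal{A}_{G,M})$. Here the fading-memory hypothesis is decisive: the total variations $|\gamma|([-\varepsilon,0])$ and $|\nu|([-\varepsilon,0])$ controlling $L$ and $B_1$ tend to $0$ as $\varepsilon\to 0$. On a short horizon $[0,\tau]$ the admissibility constants of $L$ and $B_1$ are governed by exactly these small-interval variations, so the observation norm accumulated over $[0,\tau]$ can be made arbitrarily small by shrinking $\tau$, while the bounded-feedback correction relating $\mathcal{T}_{G,M}$ to the diagonal semigroup $\mathcal{T}$ contributes only a factor that stays bounded as $\tau\to 0$. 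This is the step I expect to demand the most care, since it couples the delay structure with the closed-loop dynamics; obtaining $q<1$ for small $\tau$ is the whole content of the theorem.

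With the Miyadera--Voigt condition in hand, the perturbation theorem (see \cite{EN} and \cite[Chap.~7]{Sta}) yields that $\mathfrak{A}=\mathcal{A}_{G,M}+\mathcal{P}$ generates a strongly continuous semigroup $\mathfrak{U}$ on $\mathcal{X}$, that its orbits satisfy $\mathfrak{U}(t)\zeta\in D(\mathcal{C}_{\Lambda})\cap D(\mathbb{P}_{\Lambda})$ for a.e.\ $t\ge 0$, and that the variation-of-constants identity $\mathfrak{U}(t)=\mathcal{T}_{G,M}(t)+\int_{0}^{t}\mathcal{T}_{G,M}(t-s)\,\mathcal{P}\,\mathfrak{U}(s)\,ds$ holds on $D(\mathcal{A}_{G,M})$, which is the first claimed formula. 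Combined with the closed-loop representation of $\mathcal{T}_{G,M}$ noted above, this completes the argument.
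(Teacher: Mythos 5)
Your overall strategy coincides with the paper's: exhibit $\calP$ as a Miyadera--Voigt (admissible observation) perturbation of $\mathcal{A}_{G,M}$, using the closed-loop representation of $\calT_{G,M}$ from Proposition \ref{S3.P1}, and then invoke a perturbation theorem. However, at the step you yourself identify as the whole content of the theorem, there is a genuine gap. To estimate $\calP$ along the orbit $\calT_{G,M}(s)\zeta$ you must control $\mathbb{P}_{\Lambda}$ applied to the correction term $\int_{0}^{t}\calT_{-1}(t-s)\mathbb{B}\,\calC_{\Lambda}\calT_{G,M}(s)\zeta\,ds$, and this requires the $L^{p}$-boundedness of the input-output map $\xi\mapsto \mathbb{P}_{\Lambda}\int_{0}^{\cdot}\calT_{-1}(\cdot-s)\mathbb{B}\xi(s)\,ds$, i.e.\ the well-posedness (in fact regularity) of the triple $(\calA,\mathbb{B},\mathbb{P})$. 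This does \emph{not} follow from admissibility of $\mathbb{P}$ for $\calA$ alone, nor from the general principle you cite that admissibility of observation operators is preserved under admissible feedback: that principle applies to the observation operator of the closed-loop system built from $(\calA,\mathbb{B},\calC)$, whereas $\mathbb{P}\neq\calC$ is a different output, and its compatibility with the control maps of $\mathbb{B}$ is an additional well-posedness statement. The paper proves it by repeating the argument of Proposition \ref{S3.P1} with $(L,B_{1})$ in place of $(D,K_{1})$ (both have the same bounded-variation integral structure, so \cite{HIR} applies), yielding the estimate \eqref{dad1}; your proposal replaces this by the unsupported assertion that the feedback correction ``contributes only a factor that stays bounded.'' Relatedly, the identification $\mathbb{P}_{\Lambda}=\calP$ on $X\times W^{1,p}([-r,0],X)\times W^{1,p}([-r,0],U)$ (the paper's \eqref{S3.3}, via \cite[Lemma 3.6]{HMR}) is needed to convert the estimate on the Yosida extension into one for $\calP$ itself and to make sense of the a.e.\ inclusion $\mathfrak{U}(t)\zeta\in D(\mathbb{P}_{\Lambda})$ for general $\zeta\in\calX$; restricting to classical orbits in $D(\mathcal{A}_{G,M})$ does not cover this.

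Your attribution of the smallness to the fading-memory hypothesis is also misplaced, and the specific claim that ``on a short horizon $[0,\tau]$ the admissibility constants of $L$ and $B_{1}$ are governed by the small-interval variations $|\gamma|([-\varepsilon,0])$, $|\nu|([-\varepsilon,0])$'' is false as stated: for small $s$ the segment $z_{s}$ is mostly the shifted initial history, so $L z_{s}=\int_{-r}^{-s}d\gamma(\theta)z_{s}(\theta)+\cdots$ involves $\gamma$ on essentially all of $[-r,0]$, not just near $0$. In the paper's scheme the smallness needed for generation is obtained differently: one first establishes the $L^{p}$ admissibility estimates \eqref{dad} and \eqref{dad1} with locally bounded constants, and then, since $1<p<\infty$, H\"older's inequality produces a factor $\tau^{1/q}\to 0$; this is precisely what is packaged in \cite[Theorem 2.1]{Hadd}, so no explicit contraction constant $q<1$ has to be engineered by hand. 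The vanishing of the total variations near zero enters earlier and elsewhere, namely in \cite{HIR}, to secure admissibility and regularity of the delay triples $(Q^{X},\beta^{X},\cdot)$ and $(Q^{U},\beta^{U},\cdot)$ in the first place. To repair your proof, replace the heuristic crux paragraph by: (i) the observation that $(\calA,\mathbb{B},\mathbb{P})$ is regular by the same argument as for $(\calA,\mathbb{B},\calC)$, giving the input-output bound; (ii) the substitution $\xi(s)=\calC_{\Lambda}\calT_{G,M}(s)\zeta$ combined with \eqref{dad}; and (iii) an appeal to \cite[Theorem 2.1]{Hadd} rather than the classical Miyadera smallness condition.
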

\begin{proof}
In order to prove the claim of the above theorem, we first define
\begin{align*}
\mathbb{P}:=\calP_{\vert_{D(\calA)}}.
\end{align*}
By Proposition \ref{S3.P1} and \cite[Theorem 3.14]{EN}, it suffices to check that $ \mathbb{P} $ is a Miyadera-Voigt perturbation for $ \mathcal{A}_{G,M} $. To this end, we only need to show that $ \mathbb{P} $ is an admissible observation operator for $ \mathcal{A}_{G,M} $ for a certain $ 1<p<\infty $. In fact, by Proposition \ref{S3.P1} and \cite[Theorem 4.1]{HMR} the semigroup generated by $  \mathcal{A}_{G,M} $ on $ \calX $ is given by
\begin{align}\label{S3.2}
\calT_{G,M}(t)=\calT(t)+\int_{0}^{t}\calT_{-1}(t-s)\mathbb{B}\calC_{\Lambda}\calT_{G,M}(s)ds,\; \; \text{on } \calX,
\end{align}
for any $ t\geq 0 $. Moreover, for any $ \t>0 $ there exist $ \gamma:=\gamma(\t)>0 $ such that
\begin{align}\label{dad}
\int_{0}^{\tau} \Vert \calC_{\Lambda}\calT_{G,M}(s)\zeta\Vert^{p}ds\leq \gamma^{p}\Vert \zeta\Vert^{p},
\end{align}
for any $ \zeta\in \calX $. Since  $ D, K_{1} $ are admissible observation operators for $ Q^X,Q^U $, respectively, cf \cite[Theorem 3]{HIR}, it follows that $ \mathbb{P} $ is admissible observation operator for $ \calA $. Moreover, The Yosida extension of $\mathbb{P} $ with respect to $ \calA $ is given by
\begin{align*}
\mathbb{P}_{\Lambda} =\left(\begin{smallmatrix}
0& L_{\Lambda} & B_{1,\Lambda}\\
0 & 0 & 0  \\
0 & 0 & 0
\end{smallmatrix}\right), \;\; \mathbb{P}_{\Lambda}=X\times D(L_{\Lambda})\times D( B_{1,\Lambda}),
\end{align*}
where $ L_{\Lambda},B_{1,\Lambda} $ denotes the Yosida extension of $ L, B_1 $ with respect to $ Q^{X},Q^{U} $, respectively. According to \cite[Lemma 3.6]{HMR}, we have
$$ L_{\Lambda_{\vert_{W^{1,p}([-r,0],X)}}}=L \; \; \text{ and } \; \; B_{1,\Lambda_{\vert_{W^{1,p}([-r,0],U)}}}=B_1$$
Thus
\begin{align} \label{S3.3}
\mathbb{P}_{\Lambda}=\calP,\; \; \text{on } X\times W^{1,p}([-r,0],X)\times  W^{1,p}([-r,0],U).
\end{align}
On the other hand, by the same argument as in the proof of Proposition \ref{S3.P1} the triple  $ (\calA,\mathbb{B},\mathbb{P}) $ is regular state-space operators. In particular the input-output operators, cf. \cite[Theorem 2.7]{Sta}, satisfy
$$
\int_{0}^{\tau} \Big\Vert \mathbb{P}_{\Lambda}\int_{0}^{t}  \calT_{-1}(t-s)\mathbb{B}\xi(s)ds\Big\Vert^{p}dt\leq \delta^{p}\Vert \xi\Vert^{p}_{L^{p}[0,\t];\calU)},
$$
for all $ \t>0 $ and all input $ \xi\in L^{p}[0,\t];\calU) $. In particular for $ \xi(s)= \calC_{\Lambda}\calT_{\calG,\calM}(s)\zeta$ and using \eqref{dad}, we have
\begin{align}\label{dad1}
\int_{0}^{\tau} \Big\Vert \mathbb{P}_{\Lambda}\int_{0}^{t}  \calT_{-1}(t-s)\mathbb{B}\calC_{\Lambda}\calT_{G,M}(s)\zeta ds\Big\Vert^{p}dt\leq (\gamma\delta)^{p}\Vert \zeta\Vert^{p},
\end{align}
for any $ \zeta\in D(\mathcal{A}_{G,M}) $. Now using \eqref{S3.2}, \eqref{S3.3}, \eqref{dad1} and the fact that $ \calP $ is admissible for $ \calA $, we obtain that $ \calP $ is admissible for $ \mathcal{A}_{G,M} $. So by \cite[Theorem 2.1]{Hadd}, the operator $ ( \mathcal{A}_{G,M}+\calP,D(\mathcal{A}_{G,M})) $ generates a strongly continuous semigroup on $ \mathcal{X} $. Therefore, the statements follows from \cite[Theorem 4.1]{HMR}.
\end{proof}

Next we compute the spectrum of the generator $ \mathfrak{A} $. To this end, we need the
following remarks.
\begin{remark}\label{S3.R1}
It is to be noted that, according to \cite[Theorem 4.1]{HMR}, the operator
\begin{align*}
\calQ_{D}\varphi = Q_{m}^{X}, \quad D(\calQ_{D})= \left\lbrace \varphi \in W^{1,p}([-r,0],X): \quad\varphi(0)=\int_{-r}^{0}d\eta(\theta)\varphi(\theta) \right\rbrace .
\end{align*}
generates a strongly continuous semigroup on $ L^{p}([-r,0],X) $. Moreover, for $ \mu\in \rho(\calQ_{D}) $ (or equivalently $ 1\in \rho(De_\mu^{X}) $), we have
\begin{align*}
R(\mu,\calQ_{D})=\left(I+e_{\mu}^{X}(I-D e_{\mu}^{X})^{-1}D\right)R(\mu,Q^{X}).
\end{align*}
\end{remark}
\begin{lemma}\label{S3.R2}
Let the assumptions of Proposition \ref{S3.P1} be satisfied. Then, for $\mu \in \rho(A)  $, we have
\begin{align*}
\mu\in \rho(\mathcal{A}_{G,M}) \Leftrightarrow 1\in \rho(MD_\mu)\cap\rho(De_\mu)  .
\end{align*}
In this case,
\begin{align}\label{S3.4}
R(\mu,\mathcal{A}_{G,M})&=\left(I+\calD_{\mu}(I-\calM \calD_{\mu})^{-1}\calM\right)R(\mu,\calA)\nonumber\\
&=\left(\begin{smallmatrix}
R(\mu,A_{G,M}) & 0 & 0 \\
e_{\mu}^{X}(I-De_{\mu}^{X})^{-1}R(\mu,A_{G,M}) & R(\mu,\calQ_D) & e_{\mu}^{X}(I-De_{\mu}^{X})^{-1}K_1 R(\mu,Q^{U}) \\
0 & 0 &  R(\mu,Q^{U})
\end{smallmatrix}\right).
\end{align}
\end{lemma}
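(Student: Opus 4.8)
The plan is to read off the domain of $\mathcal{A}_{G,M}$ in the form $\{\zeta\in\mathcal{Z}:\mathcal{G}\zeta=\mathcal{M}\zeta\}$ obtained in the proof of Proposition~\ref{S3.P1}, and then to run the Greiner-type resolvent computation for boundary perturbations. As a preliminary I would note that $\mu\in\rho(A)$ already gives $\mu\in\rho(\mathcal{A})$: the generators $Q^{X}$ and $Q^{U}$ of the (nilpotent) left shift semigroups on the finite interval $[-r,0]$ have empty spectrum, so $\rho(\mathcal{A})=\rho(A)$ and, because $\mathcal{A}$ generates a block diagonal semigroup, $R(\mu,\mathcal{A})=\mathrm{diag}\bigl(R(\mu,A),R(\mu,Q^{X}),R(\mu,Q^{U})\bigr)$. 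I would also recall that for such $\mu$ the operator $\mathcal{G}$ maps $\ker(\mu-\mathcal{A}_m)$ isomorphically onto $\mathcal{U}$, with inverse the Dirichlet operator $\mathcal{D}_{\mu}$ of \eqref{Bneu}, so that $\mathcal{G}\mathcal{D}_{\mu}=I$ and $\mathcal{D}_{\mu}$ takes values in $\ker(\mu-\mathcal{A}_m)$.

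First I would solve $(\mu-\mathcal{A}_{G,M})\zeta=f$ directly. Writing $\zeta=R(\mu,\mathcal{A})f+\mathcal{D}_{\mu}w$ with $w\in\mathcal{U}$ (the second summand being the general element of $\ker(\mu-\mathcal{A}_m)$) and substituting into $\mathcal{G}\zeta=\mathcal{M}\zeta$, the identities $\mathcal{G}R(\mu,\mathcal{A})f=0$ (since $R(\mu,\mathcal{A})f\in\ker\mathcal{G}$) and $\mathcal{G}\mathcal{D}_{\mu}=I$ collapse the boundary condition to $(I-\mathcal{M}\mathcal{D}_{\mu})w=\mathcal{M}R(\mu,\mathcal{A})f$. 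Hence $\mu-\mathcal{A}_{G,M}$ is boundedly invertible if and only if $I-\mathcal{M}\mathcal{D}_{\mu}$ is invertible on $\mathcal{U}$, i.e. $1\in\rho(\mathcal{M}\mathcal{D}_{\mu})$, and in that case
\begin{align*}
R(\mu,\mathcal{A}_{G,M})=\bigl(I+\mathcal{D}_{\mu}(I-\mathcal{M}\mathcal{D}_{\mu})^{-1}\mathcal{M}\bigr)R(\mu,\mathcal{A}),
\end{align*}
which is the first line of \eqref{S3.4}.

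Next I would translate $1\in\rho(\mathcal{M}\mathcal{D}_{\mu})$ into the scalar conditions. Multiplying out the matrices of \eqref{Bneu} yields
\begin{align*}
I-\mathcal{M}\mathcal{D}_{\mu}=\begin{pmatrix} I-MD_{\mu} & 0 & 0 \\ -D_{\mu} & I-De_{\mu}^{X} & -K_{1}e_{\mu}^{U} \\ 0 & 0 & I \end{pmatrix},
\end{align*}
which is block lower triangular with diagonal blocks $I-MD_{\mu}$, $I-De_{\mu}^{X}$ and $I_{U}$. Since the last block is always invertible, invertibility of the whole matrix is equivalent to $1\in\rho(MD_{\mu})$ together with $1\in\rho(De_{\mu})$; this is precisely the stated equivalence.

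Finally I would obtain the explicit matrix in \eqref{S3.4} by back-substitution: inverting the triangular matrix above, multiplying on the left by $\mathcal{D}_{\mu}$, adding $I$, and multiplying on the right by the block diagonal $R(\mu,\mathcal{A})$. The $(1,1)$ entry simplifies to $\bigl(I+D_{\mu}(I-MD_{\mu})^{-1}M\bigr)R(\mu,A)$, which I identify with $R(\mu,A_{G,M})$ through the scalar Greiner formula valid under \textbf{(A3)}, and the $(2,2)$ entry simplifies to $\bigl(I+e_{\mu}^{X}(I-De_{\mu}^{X})^{-1}D\bigr)R(\mu,Q^{X})$, which equals $R(\mu,\calQ_D)$ by Remark~\ref{S3.R1}; the remaining entries are read off analogously. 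I expect the only real difficulty to be the bookkeeping in this last step: one must track the order of the noncommuting, partly unbounded factors $D_{\mu}$, $e_{\mu}^{X}$, $(I-MD_{\mu})^{-1}$ and $(I-De_{\mu}^{X})^{-1}$, and use the block diagonality of $R(\mu,\mathcal{A})$ to ensure that no spurious cross terms survive and that each composition is defined on the appropriate space.
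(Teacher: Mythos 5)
Your route is the paper's route: reduce everything to the Greiner-type formula $R(\mu,\mathcal{A}_{G,M})=\bigl(I+\mathcal{D}_{\mu}(I-\mathcal{M}\mathcal{D}_{\mu})^{-1}\mathcal{M}\bigr)R(\mu,\mathcal{A})$, observe that $I-\mathcal{M}\mathcal{D}_{\mu}$ is block lower triangular with diagonal blocks $I-MD_{\mu}$, $I-De_{\mu}^{X}$, $I_{U}$, and back-substitute; your matrix multiplication is the same one the paper performs, your identifications of the $(1,1)$ and $(2,2)$ entries with $R(\mu,A_{G,M})$ and $R(\mu,\mathcal{Q}_{D})$ are exactly the paper's, and your preliminary observations ($\rho(\mathcal{A})=\rho(A)$ by nilpotency of the shift semigroups, $\mathcal{D}_{\mu}$ inverting $\mathcal{G}$ on $\ker(\mu-\mathcal{A}_{m})$, with $\mathcal{A}_{G,M}$ correctly read as the diagonal operator, i.e.\ without the $L,B_{1}$ entries that belong to $\mathcal{P}$) are all consistent with what the paper uses.

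The one place where you claim more than your computation delivers is the sentence ``Hence $\mu-\mathcal{A}_{G,M}$ is boundedly invertible \emph{if and only if} $I-\mathcal{M}\mathcal{D}_{\mu}$ is invertible on $\mathcal{U}$.'' Your reduction shows that solving $(\mu-\mathcal{A}_{G,M})\zeta=f$ is equivalent to solving $(I-\mathcal{M}\mathcal{D}_{\mu})w=\mathcal{M}R(\mu,\mathcal{A})f$, which gives the ``if'' direction in full, and, taking $f=0$, gives injectivity of $I-\mathcal{M}\mathcal{D}_{\mu}$ whenever $\mu-\mathcal{A}_{G,M}$ is injective. But unique solvability for every $f\in\mathcal{X}$ only shows that the range of $I-\mathcal{M}\mathcal{D}_{\mu}$ contains the range of $\mathcal{M}R(\mu,\mathcal{A})$, not that it is all of $\mathcal{U}$; since $\mathcal{M}R(\mu,\mathcal{A})$ need not be surjective, surjectivity of $I-\mathcal{M}\mathcal{D}_{\mu}$ does not follow from this bookkeeping alone, and in the classical Greiner setting the converse direction is precisely the delicate one. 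The paper does not attempt it by hand: it invokes \cite[Theorem~4.1]{HMR}, where the equivalence $\mu\in\rho(\mathcal{A}_{G,M})\Leftrightarrow 1\in\rho(\mathcal{M}\mathcal{D}_{\mu})$ is established using the regular-linear-systems structure guaranteed here by assumption \textbf{(A3)} (identity as admissible feedback, verified for the product system in Proposition~\ref{S3.P1}). So either cite that theorem for the forward implication, as the paper does --- you already lean on the same source implicitly when you identify the $(1,1)$ entry via ``the scalar Greiner formula valid under \textbf{(A3)}'' --- or supply a separate argument for surjectivity of $I-\mathcal{M}\mathcal{D}_{\mu}$; everything else in your proposal goes through as written.
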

\begin{proof}
Under the assumptions \textbf{(A1)}-\textbf{(A3)} and according to \cite[Theorem 4.1]{HMR} we have
\begin{align*}
A_{G,M}=A_{m},\; \; D(A_{G,M} )=\{z\in Z:\: Gz=Mz\}
\end{align*}
generates a strongly continuous semigroup $ (T_{G,M}(t))_{t\geq 0} $ on $ X $. Moreover, for $\mu \in \rho(A)  $, we have
\begin{align*}
\mu\in \rho(A_{G,M}) \Leftrightarrow 1\in \rho(MD_\mu).
\end{align*}
In this case
$$ R(\mu,A_{G,M})=\big(I+D_{\mu}(I_{\partial X}-M D_{\mu})^{-1}M\big)R(\mu,A) $$
On the other hand, for $\mu \in \rho(A)  $ we define
\begin{align*}
I_{\mathcal{U}}-\mathcal{M}\calD_{\mu} = \begin{pmatrix}
I_{\partial X}-MD_{\mu}& 0 & 0\\
-D_\mu & I_{X}-De^{X}_\mu & -K_1e^{U}_\mu\\
0 & 0 & I_X
\end{pmatrix}.
\end{align*}
Thus $ I_{\mathcal{U}}-\mathcal{M}\calD_{\mu} $ is invertible if and only if $ 1\in \rho(MD_\mu)\cap\rho(De_\mu)   $, hence $ \mu\in \rho(\mathcal{A}_{G,M}) $ is equivalent to the fact that $ 1\in \rho(MD_\mu)\cap\rho(De_\mu)   $, according to  \cite[Theorem 4.1]{HMR}. Moreover, \eqref{S3.4} is a direct computation using the expression
$$ R(\mu,\mathcal{A}_{G,M})=\left(I+\calD_{\mu}(I-\calM \calD_{\mu})^{-1}\calM\right)R(\mu,\calA) .$$
\end{proof}

We are now in a position to rigourously characterize the spectrum of the generator $ \mathfrak{A} $.
\begin{proposition}\label{S3.P2}
Let the assumptions of Proposition \ref{S3.P1} be satisfied and let $ \mu\in \rho(A) $. Then
\begin{align*}
\mu\in \rho(\mathfrak{A}) \Leftrightarrow 1\in \rho(\Delta(\mu))\Leftrightarrow 1\in \rho(MD_\mu)\cap\rho(De_\mu^{X}),
\end{align*}
where $ \Delta(\mu):=e_{\mu}^{X}(I-De_{\mu}^{X})^{-1}R(\mu,A_{G,M})L$. In addition,
\begin{align}\label{S3.resol}
R(\mu,\mathfrak{A})=\left(\begin{smallmatrix}
R(\mu,A_{G,M})[I+L\Gamma(\mu)R(\mu,A_{G,M})]& R(\mu,A_{G,M})LR(1,\Delta(\mu))R(\mu,\calQ_{D}) & \Lambda (\mu)R(\mu,Q^{U})\\
\Gamma(\mu)R(\mu,A_{G,M}) & R(1,\Delta(\mu))R(\mu,\calQ_{D})&\Omega (\mu)R(\mu,Q^{U})\\
0 & 0 & R(\mu,Q^{U})
\end{smallmatrix}\right),
\end{align}
for $1\in \rho(MD_\mu)\cap\rho(De_\mu^{X})   $. Where the operator $ \calQ_D $ as in Remark \ref{S3.R1} and
\begin{align*}
\Gamma(\mu) &:=R(1,\Delta(\mu))e_{\mu}^{X}(I-De_{\mu}^{X})^{-1},\;\;
\Omega (\mu):=\Gamma(\mu) \Big(K_{1}+R(\mu,A_{G,M}) B_{1}\Big)\\
\Lambda (\mu)&:= R(\mu,A_{G,M})\Big(L\Omega (\mu)+B_1\Big) .
\end{align*}
\end{proposition}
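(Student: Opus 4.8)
The plan is to regard $\mathfrak{A}=\mathcal{A}_{G,M}+\calP$ as a bounded perturbation of $\mathcal{A}_{G,M}$ acting on the same domain and to reduce the resolvent problem to the inversion of a single operator on $X$. Fix $\mu\in\rho(A)$ and assume $1\in\rho(MD_{\mu})\cap\rho(De_{\mu}^{X})$; by Lemma \ref{S3.R2} this is exactly $\mu\in\rho(\mathcal{A}_{G,M})$, and it is the range of parameters on which $\Delta(\mu)$ is even defined, since $\Delta(\mu)$ involves both $R(\mu,A_{G,M})$ and $(I-De_{\mu}^{X})^{-1}$. By Theorem \ref{S3.T1} we have $D(\mathfrak{A})=D(\mathcal{A}_{G,M})\subset\calZ=D(\calP)$, and on $\calZ$ the operator $\calP$ is bounded into its first component (as $L,B_{1}$ are bounded on the $W^{1,p}$-spaces), while $R(\mu,\mathcal{A}_{G,M})$ maps $\calX$ boundedly into $\calZ$; hence $\calP R(\mu,\mathcal{A}_{G,M})\in\mathcal{L}(\calX)$. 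Substituting $\zeta=R(\mu,\mathcal{A}_{G,M})\eta$ into $(\mu-\mathfrak{A})\zeta=\xi$ gives $(\mu-\mathfrak{A})\zeta=\big(I-\calP R(\mu,\mathcal{A}_{G,M})\big)\eta$, and since $\eta\mapsto R(\mu,\mathcal{A}_{G,M})\eta$ is a bijection of $\calX$ onto $D(\mathcal{A}_{G,M})$, it follows that $\mu\in\rho(\mathfrak{A})$ iff $I-\calP R(\mu,\mathcal{A}_{G,M})$ is boundedly invertible on $\calX$, in which case $R(\mu,\mathfrak{A})=R(\mu,\mathcal{A}_{G,M})\big(I-\calP R(\mu,\mathcal{A}_{G,M})\big)^{-1}$.

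Next I would compute $\calP R(\mu,\mathcal{A}_{G,M})$ from the block forms of $\calP$ and of $R(\mu,\mathcal{A}_{G,M})$ in \eqref{S3.4}. Since only the first block-row of $\calP$ is nonzero, the product is block upper-triangular and concentrated in its first row; writing $N_{\mu}:=e_{\mu}^{X}(I-De_{\mu}^{X})^{-1}R(\mu,A_{G,M})$, its $(1,1)$ entry is $LN_{\mu}$ while its $(2,2)$ and $(3,3)$ entries vanish. Consequently $I-\calP R(\mu,\mathcal{A}_{G,M})$ is block upper-triangular with identity blocks in positions $(2,2)$ and $(3,3)$, so it is invertible on $\calX$ precisely when its $(1,1)$ block $I-LN_{\mu}$ is invertible on $X$. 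Because $\Delta(\mu)=N_{\mu}L$ whereas this block is $LN_{\mu}$, the elementary identity $1\in\rho(N_{\mu}L)\Leftrightarrow 1\in\rho(LN_{\mu})$ (with $(I-LN_{\mu})^{-1}=I+LR(1,\Delta(\mu))N_{\mu}$) shows that $I-LN_{\mu}$ is invertible iff $1\in\rho(\Delta(\mu))$. Combined with the first paragraph and Lemma \ref{S3.R2}, this yields the chain of equivalences in the statement, the condition $1\in\rho(MD_{\mu})\cap\rho(De_{\mu}^{X})$ being needed throughout for $\mathcal{A}_{G,M}$ (and hence $\Delta(\mu)$) to be available.

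To obtain \eqref{S3.resol} I would then invert the triangular matrix explicitly: its inverse keeps the identity blocks in rows two and three and has first row $\big((I-LN_{\mu})^{-1},\,(I-LN_{\mu})^{-1}LR(\mu,\calQ_{D}),\,(I-LN_{\mu})^{-1}W R(\mu,Q^{U})\big)$, where $W=Le_{\mu}^{X}(I-De_{\mu}^{X})^{-1}K_{1}+B_{1}$, and multiply it on the left by $R(\mu,\mathcal{A}_{G,M})$ from \eqref{S3.4}. The nine resulting entries collapse to the claimed ones by repeatedly using the two identities $(I-LN_{\mu})^{-1}=I+L\Gamma(\mu)R(\mu,A_{G,M})$ and $(I-LN_{\mu})^{-1}L=LR(1,\Delta(\mu))$, together with $N_{\mu}L=\Delta(\mu)$ and $(I-\Delta(\mu))\Gamma(\mu)=e_{\mu}^{X}(I-De_{\mu}^{X})^{-1}$; these are exactly the relations that turn the products into $\Gamma(\mu)$, $\Omega(\mu)=\Gamma(\mu)\big(K_{1}+R(\mu,A_{G,M})B_{1}\big)$ and $\Lambda(\mu)=R(\mu,A_{G,M})\big(L\Omega(\mu)+B_{1}\big)$.

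The one point needing care is the reduction in the first paragraph: the substitution $\zeta=R(\mu,\mathcal{A}_{G,M})\eta$ and the perturbation formula are legitimate only because $\calP R(\mu,\mathcal{A}_{G,M})$ is genuinely bounded on all of $\calX$, which rests on $R(\mu,\mathcal{A}_{G,M})$ mapping into $\calZ$ and on $\calP$ being bounded there (consistent with $\mathbb{P}_{\Lambda}=\calP$ on $\calZ$ recorded in \eqref{S3.3}). Everything else is bookkeeping; the $(1,3)$ and $(2,3)$ entries are the most laborious, since their simplification to $\Lambda(\mu)R(\mu,Q^{U})$ and $\Omega(\mu)R(\mu,Q^{U})$ hinges on the $(I-\Delta(\mu))\Gamma(\mu)$ identity above.
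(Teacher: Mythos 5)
Your proposal is correct, and it arrives at \eqref{S3.resol} by a mirrored factorization rather than the paper's. The paper writes $\mu-\mathfrak{A}=(\mu-\calA_{G,M})\bigl(I-R(\mu,\calA_{G,M})\calP\bigr)$ (its \eqref{S3.6}), where the block matrix $I-R(\mu,\calA_{G,M})\calP$ is upper triangular with middle diagonal entry exactly $I-\Delta(\mu)$; the resolvent is then read off by back-substitution in the system \eqref{S3.7}, so $1\in\rho(\Delta(\mu))$ enters immediately and no further spectral identity is needed. You factor on the other side, $(\mu-\mathfrak{A})R(\mu,\calA_{G,M})=I-\calP R(\mu,\calA_{G,M})$, which concentrates the perturbation in the first block row and makes the pivot $I-LN_{\mu}$ on $X$, with $N_{\mu}=e_{\mu}^{X}(I-De_{\mu}^{X})^{-1}R(\mu,A_{G,M})$; you therefore need the $AB$-versus-$BA$ equivalence $1\in\rho(LN_{\mu})\Leftrightarrow 1\in\rho(N_{\mu}L)=\rho(\Delta(\mu))$, together with $(I-LN_{\mu})^{-1}=I+LR(1,\Delta(\mu))N_{\mu}$ and $(I-LN_{\mu})^{-1}L=LR(1,\Delta(\mu))$ --- ingredients absent from the paper's proof. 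What your route buys: since $I-\calP R(\mu,\calA_{G,M})$ is bounded on $\calX$ and $R(\mu,\calA_{G,M})$ is a bijection of $\calX$ onto $D(\mathfrak{A})=D(\calA_{G,M})$, you obtain the genuine two-sided equivalence $\mu\in\rho(\mathfrak{A})\Leftrightarrow 1\in\rho(\Delta(\mu))$ under the standing hypothesis $1\in\rho(MD_{\mu})\cap\rho(De_{\mu}^{X})$, whereas the paper explicitly carries out only the sufficiency direction (assume $1\in\rho(\Delta(\mu))$, solve the triangular system). The price is heavier algebra in the $(1,3)$ and $(2,3)$ entries; I checked that your simplifications via $R(1,\Delta(\mu))\Delta(\mu)=R(1,\Delta(\mu))-I$ do reduce them to $\Lambda(\mu)R(\mu,Q^{U})$ and $\Omega(\mu)R(\mu,Q^{U})$, and the remaining entries match \eqref{S3.resol} as well. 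Your attention to the boundedness of $\calP R(\mu,\calA_{G,M})$ on $\calX$ (the resolvent \eqref{S3.4} maps into $\calZ$, where $\calP$ is bounded, consistent with \eqref{S3.3}) supplies a justification the paper leaves implicit. One caveat you share with the paper: neither argument shows that $\mu\in\rho(\mathfrak{A})$ forces $1\in\rho(MD_{\mu})\cap\rho(De_{\mu}^{X})$, so the three-way equivalence as literally stated is established only in this conditional sense in both proofs.
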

\begin{proof}
Let $ \mu\in \rho(A)\cap \rho(A_{G,M}) $ and $ (x,f,g)^{\top}\in \calX $, we are seeking for $ (z,\varphi,\psi)^{\top}\in D(\calA_{G,M}) $ such that
\begin{align}\label{S3.5}
(\mu-\mathfrak{A})(z,\varphi,\psi)^{\top}=(x,f,g)^{\top}.
\end{align}
From the proof of the Theorem \ref{S3.T1} (in particular \eqref{S3.3}), we have seen that on $ D(\calA_{G,M}) $ we can write $ \mathfrak{A}=\calA_{G,M}+\calP $. Thus, according to Lemma \ref{S3.R2}, for $ \mu \in \rho(A_{G,M})\cap \rho(\calA_{G,M}) $ we have
\begin{align}\label{S3.6}
 \mu-\mathfrak{A}&=\mu-\calA_{G,M}-\calP\nonumber\\
 &=(\mu-\calA_{G,M})(I-R(\mu,\calA_{G,M})\calP)\\
  &=(\mu-\calA_{\calG,\calM})\left(\begin{smallmatrix}
I & -R(\mu,A_{G,M})L & -R(\mu,A_{G,M})B_1 \\
0 & I- e_{\mu}^{X}(I-De_{\mu}^{X})^{-1}R(\mu,A_{G,M})L& -e_{\mu}^{X}(I-De_{\mu}^{X})^{-1}R(\mu,A_{G,M})B_1 \nonumber\\
0 & 0 &  I
\end{smallmatrix}\right).
\end{align}
So, the above matrix together with the equation \eqref{S3.5} yields
\begin{align}\label{S3.7}
z-R(\mu,A_{G,M})L\varphi-R(\mu,A_{G,M})B_1\psi=&x \nonumber\\
 (I- e_{\mu}^{X}(I-De_{\mu}^{X})^{-1}R(\mu,A_{G,M})L)\varphi-e_{\mu}^{X}(I-De_{\mu}^{X})^{-1}R(\mu,A_{G,M})B_1\psi=&f\\
 \psi=&g \nonumber.
\end{align}
Assume that $ 1\in \rho(\Delta(\mu)) $. Then,
$$
\varphi=R(1,\Delta(\mu))f+R(1,\Delta(\mu))e_{\mu}^{X}(I-De_{\mu}^{X})^{-1} R(\mu,A_{G,M})B_{1}g.
$$
On the other hand, replacing the value of $ \varphi $ in \eqref{S3.7},
\begin{align*}
z&=x+R(\mu,A_{G,M})LR(1,\Delta(\mu))f\\
&+R(\mu,A_{G,M})\big( LR(1,\Delta(\mu))e_{\mu}^{X}(I-De_{\mu}^{X})^{-1} R(\mu,A_{G,M})B_{1}+B_1 \big)g.
\end{align*}
Thus $ \mu\in \rho(\mathfrak{A}) $ and \eqref{S3.resol} holds.
\end{proof}

\section{ Frequency domain characterization for approximate controllability of neutral delay systems}\label{sec:3}
In this section, conditions for approximate controllability of the abstract perturbed boundary control systems of neutral type \eqref{Sy.1} will be proposed. Indeed, by using the feedback theory of regular linear systems and methods of functional analysis, necessary and sufficient conditions of approximate controllability are formulated and proved.
\subsection{Abstract setting}\label{S4.Sub1}
We write below system \eqref{Sy.1} as an abstract perturbed boundary control system with input space. For this put
$$ \mathcal{B}\left(\begin{smallmatrix}
v \\
u
\end{smallmatrix}\right)=\left(\begin{smallmatrix}
B_{0}u \\
0 \\
0
\end{smallmatrix}\right),\quad\mathcal{K}\left(\begin{smallmatrix}
v \\
u
\end{smallmatrix}\right)=\left(\begin{smallmatrix}
Kv \\
K_{0}u\\
0
\end{smallmatrix}\right),\; \; u,v\in U.$$
With the above notations and according to Section \ref{sec:2} we can reformulate the abstract boundary control system of neutral type \eqref{Sy.1} as
\begin{align}\label{Pe}
\left\lbrace
\begin{array}{lll}
\dot{\zeta}(t)&=&[\mathcal{A}_{m}+\calP]\zeta(t)+\mathcal{B}u(t),\quad t \geq 0,\\ \zeta(0)&=&(\varrho_0,\varphi,\psi)^{\top}, \\
\calG \zeta(t)&=&\calM \zeta(t)+\mathcal{K}u(t),\qquad\quad\;\; t \geq 0.
\end{array}
\right.
\end{align}
Moreover, combining Theorem \ref{S3.T1} with \cite[Theorem 4.3]{HMR}, it shows that the system \eqref{Pe}, hence the system \eqref{Sy.1}, has a unique solution. This solution coincides with the solution of the open loop system
\begin{align}\label{S3.Sy2}
\left\lbrace
\begin{array}{lll}
\dot{\zeta}(t)&=&\mathfrak{A}_{-1}\zeta(t)+(\mathbb{B}\calK+\mathcal{B})\left(\begin{smallmatrix}
v(t)\\ u(t)
\end{smallmatrix}\right) \quad t> 0,\\
\zeta(0)&=&(\varrho_0,\varphi,\psi)^{\top}.
\end{array}
\right.
\end{align}
In particular, according to \cite[Theorem 4.3]{HMR} the state trajectory of the system \eqref{S3.Sy2}, for the initial state $ \zeta(0) $, is given by
\begin{align}\label{S3.8}
\zeta(t)=\mathfrak{U}(t)\zeta(0)+\displaystyle\int_{0}^{t}\mathfrak{U}_{-1}(t-s)(\mathbb{B}\calK+\mathcal{B})\left(\begin{smallmatrix}
v(s)\\ u(s)
\end{smallmatrix}\right)ds,\; \; t\geq 0.
\end{align}
\begin{remark}\label{S3.R3}
Observe that the function $ \zeta(.) $ given by \eqref{S3.8} is only a strong solution of \eqref{S3.Sy2}, which is defined for any $\zeta(0)\in \calX$ and $ u,v\in L^{p}_{loc}([0,\infty);U) $. For classical solutions, new extrapolation spaces associated with $\calX$ and more conditions on the smoothness of the control function $u $ are needed.
\end{remark}
To state our results on approximate controllability of \eqref{Pe}, we need first to define the concept of approximate controllability for \eqref{S3.Sy2}.
\begin{definition}\label{S3.D1}
According to the equation \eqref{S3.8}, we define the operator
$$ \Phi^{\mathfrak{A}}(t)u:=\int_{0}^{t}\mathfrak{U}_{-1}(t-s)(\mathbb{B}\calK+\mathcal{B})\left(\begin{smallmatrix}
v(s)\\ u(s)
\end{smallmatrix}\right)ds, $$
for $ t\geq 0 $ and $ u,v\in L^{p}([0,t];U) $. The system \eqref{Pe} is said to be $ \mathfrak{X}$-approximately controllable if
\begin{align*}
Cl\left(\bigcup_{t\geqslant 0}P_{\mathfrak{X}}(\mathcal{R}(t))\right) =\mathfrak{X},
\end{align*}
where $ \mathfrak{X} $ can be any of $ \mathcal{X}$, $X$, $X\times L^{p}([-r,0];X) $, $L^{p}([-r,0];X)  $ and  $ L^{p}([-r,0];U)  $, and $ P_{\mathfrak{X}} $ is the projection operator from $ \mathcal{X} $ to $ \mathfrak{X} $. In particular, $ P_{\mathcal{X}} =I $.
\end{definition}
\begin{remark}\label{S3.R4}
Notice that when the system \eqref{Pe} is $ X$-approximately controllable, it means that the corresponding state $z$ is approximately controllable; when it is $ X\times L^{p}([-r,0];X) $-approximately controllable, it means that the corresponding state $z$ and $ x_{t} $ are approximately controllable; when it is $L^{p}([-r,0];U) $-approximately controllable, it means that the corresponding state $u_{t}$ is approximately controllable. Apparently, it is always $L^{p}([-r,0];U) $-approximately controllable.
\end{remark}
In the following we will use the duality of product spaces. Assume that $ X $ and $ U $ are reflexive Banach spaces (or more generally its satisfies the Radon-Nikodym property) and $ 1<p<\infty $. Then the dual space $ \calX'$ of $ \calX $ is identified with the product space $ X'\times L^{q}([-r,0],X')\times L^{q}([-r,0],U')$ with $ q $ satisfying $ \tfrac{1}{p}+\tfrac{1}{q}=1 $. the For the sake of simplicity,

Then the following theorem holds.
\begin{theorem}\label{T.3}
Under the framework of Definition \ref{S3.D1} and with the notation in Proposition \ref{S3.P2}, the following assertions are equivalent:
\begin{itemize}
\item[\emph{(a)}] the system \eqref{Pe} is $ X\times L^{p}([-r,0],X)$-approximately controllable,
\item[\emph{(b)}] for $ 1\in \rho(MD_\mu)\cap\rho(De_\mu^{X})  $, $ x'\in X' $ and $ \varphi\in L^{q}([-r,0],X') $, the fact that
\begin{align*}
&\Big\langle [I+R(\mu,A_{G,M})L\Gamma(\mu)]R(\mu,A_{G,M})\big(BKv+B_{0}u\big)\\
& + R(\mu,A_{G,M})L\Gamma(\mu)K_{0}u+ \Lambda (\mu)e_\mu^{U} u,x'\Big\rangle\\
 &+\Big\langle\Gamma(\mu) R(\mu,A_{G,M})\big(BKv+B_{0}u\big) + \Gamma(\mu)K_{0}u+\Omega (\mu)e_{\mu}^{U}u,\varphi \Big\rangle =0,\;\forall u,v\in U,
\end{align*}
implies that $ x'=0 $ and $ \varphi =0 $.
\end{itemize}
In this case, there exists $ \t>0 $ such that the system \eqref{Pe} is approximately controllable.
\end{theorem}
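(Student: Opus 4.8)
The plan is to reduce assertion (a) to a frequency-domain injectivity condition by duality together with the Laplace transform, and then to make that condition explicit by inserting the resolvent formula \eqref{S3.resol}. First I would record the duality characterization. With $\mathfrak{X}=X\times L^{p}([-r,0],X)$ the dual is $\mathfrak{X}'=X'\times L^{q}([-r,0],X')$, and the adjoint of the projection $P_{\mathfrak{X}}$ is the embedding $(x',\varphi)\mapsto(x',\varphi,0)\in\calX'$. Since the reachable sets are nested linear subspaces, $\bigcup_{t\geq 0}P_{\mathfrak{X}}(\calR(t))$ is itself a subspace, so by the Hahn--Banach theorem assertion (a) is equivalent to the statement that the only $(x',\varphi)\in\mathfrak{X}'$ with $\langle\Phi^{\mathfrak{A}}(t)(v,u)^{\top},(x',\varphi,0)\rangle=0$ for every $t\geq 0$ and every admissible input $(v,u)$ is $(x',\varphi)=0$.

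Next I would pass to the frequency domain. By Theorem \ref{S3.T1} the control operator $\mathbb{B}\calK+\calB$ is admissible, so the trajectory \eqref{S3.8} with zero initial data is a continuous $\calX$-valued function whose Laplace transform, by the relation \eqref{S2.6}, is $\hat\zeta(\mu)=R(\mu,\mathfrak{A}_{-1})(\mathbb{B}\calK+\calB)\,\widehat{(v,u)^{\top}}(\mu)$. Injectivity of the Laplace transform and analytic continuation in $\mu$ then show that the annihilation condition above is equivalent to $\langle R(\mu,\mathfrak{A}_{-1})(\mathbb{B}\calK+\calB)(v,u)^{\top},(x',\varphi,0)\rangle=0$ for all $u,v\in U$ and all $\mu$ with $1\in\rho(MD_\mu)\cap\rho(De_\mu^{X})$, that is, for all $\mu\in\rho(\mathfrak{A})$ by Proposition \ref{S3.P2}.

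The decisive step is to evaluate this pairing. Writing $(\mathbb{B}\calK+\calB)(v,u)^{\top}$ as the column with entries $BKv+B_{0}u$, $\beta^{X}K_{0}u$ and $\beta^{U}u$, I would apply the resolvent matrix \eqref{S3.resol}, using the shift identities $R(\mu,Q^{X}_{-1})\beta^{X}=e_{\mu}^{X}$ and $R(\mu,Q^{U}_{-1})\beta^{U}=e_{\mu}^{U}$ together with their consequence $R(\mu,\calQ_{D})_{-1}\beta^{X}=e_{\mu}^{X}(I-De_{\mu}^{X})^{-1}$, which is immediate from Remark \ref{S3.R1}. Recalling $\Gamma(\mu)=R(1,\Delta(\mu))e_{\mu}^{X}(I-De_{\mu}^{X})^{-1}$, the first two components of $R(\mu,\mathfrak{A}_{-1})(\mathbb{B}\calK+\calB)(v,u)^{\top}$ turn out to be exactly the two bracketed expressions appearing in (b); pairing them against $x'$ and $\varphi$ reproduces the displayed identity, so the frequency-domain condition is precisely (b). This establishes the equivalence (a)$\Leftrightarrow$(b). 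For the closing assertion I would invoke the finite-memory structure of the retarded dynamics: the map $s\mapsto(\mathbb{B}\calK+\calB)^{*}\mathfrak{U}(s)^{*}(x',\varphi,0)$ satisfies a retarded equation, so its vanishing on an interval $[0,\tau]$ with $\tau$ exceeding the delay $r$ already forces it to vanish on $[0,\infty)$; hence finite-time and infinite-time approximate controllability coincide for such $\tau$, giving the stated $\tau>0$.

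The main obstacle is the computation of the third step, carried out in the extrapolation space $\calX_{-1}$. One must justify that $R(\mu,\mathfrak{A}_{-1})\mathbb{B}$ extends the formula \eqref{S3.resol} and coincides with the relevant Dirichlet-type operator, and then carefully track the off-diagonal entries of the resolvent matrix through which the $U$-history input $\beta^{U}u$ produces the terms $\Lambda(\mu)e_{\mu}^{U}u$ and $\Omega(\mu)e_{\mu}^{U}u$ in the first two components; getting these cross-terms right is what pins down the exact shape of condition (b).
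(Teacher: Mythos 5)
Your treatment of the equivalence (a)$\Leftrightarrow$(b) is correct and follows essentially the same route as the paper: the paper likewise reduces everything to the annihilation of $R(\mu,\mathfrak{A})(\mathbb{B}\calK+\calB)(v,u)^{\top}$ by dual pairs $(x',\varphi,0)$ --- it outsources the Hahn--Banach/Laplace-transform reduction that you spelled out to \cite[Proposition 3]{EHR1} --- and then carries out exactly your matrix computation: it writes $(\mathbb{B}\calK+\calB)(v,u)^{\top}=(BKv+B_0u,\ \beta^{X}K_0u,\ \beta^{U}u)^{\top}$, applies the resolvent \eqref{S3.resol}, and uses $R(\mu,\mathcal{A}_{-1})\mathbb{B}=\mathcal{D}_{\mu}$, i.e.\ precisely your identities $R(\mu,Q^{X}_{-1})\beta^{X}=e_{\mu}^{X}$, $R(\mu,Q^{U}_{-1})\beta^{U}=e_{\mu}^{U}$ and their consequence $R(\mu,\calQ_{D})_{-1}\beta^{X}=e_{\mu}^{X}(I-De_{\mu}^{X})^{-1}$ (in the paper's notation, $R(\mu,\calQ_D)=(I-e_\mu^X D)^{-1}R(\mu,Q^X)$ together with $e_\mu^X(I-De_\mu^X)^{-1}=(I-e_\mu^X D)^{-1}e_\mu^X$). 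Your cross-terms $\Lambda(\mu)e_{\mu}^{U}u$, $\Omega(\mu)e_{\mu}^{U}u$ and $\Gamma(\mu)K_0u$ are exactly the entries in the paper's displayed column, so this part of your proposal is a faithful (indeed more detailed) reconstruction of the paper's argument.

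The gap is in your closing assertion. You claim that $s\mapsto(\mathbb{B}\calK+\calB)^{*}\mathfrak{U}(s)^{*}(x',\varphi,0)$ satisfies a retarded equation, so that its vanishing on $[0,\tau]$ with $\tau>r$ forces vanishing on $[0,\infty)$. This unique-continuation statement is unsubstantiated and dubious as stated: the retarded structure propagates forward from history segments, so vanishing of this observation on a window of length exceeding $r$ does not by itself determine its later values; even for finite-dimensional retarded systems the time at which the closure of the reachable subspace stabilizes is of order $nr$ rather than $r$, and in the present infinite-dimensional setting (unbounded $D$, $L$, $K_1$, boundary control) no such critical time has been established anywhere in the paper. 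The paper's own route to the final sentence is entirely different and avoids this issue: it invokes Remark \ref{S3.R4}, by which the system \eqref{Pe} is automatically $L^{p}([-r,0],U)$-approximately controllable (after time $r$ the control history $u_t$ sweeps out a dense set), and combines this with assertion (a) to conclude the existence of a time at which \eqref{Pe} is approximately controllable in the full space $\calX$. If you want to keep your finite-$\tau$ argument, you would have to actually prove the stabilization in $t$ of $\overline{P_{\mathfrak{X}}(\mathcal{R}(t))}$, which is a genuinely additional piece of work not contained in your sketch.
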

\begin{proof}
Let $ \mu\in \rho(A) $ such that $ 1\in \rho(MD_\mu)\cap\rho(De_\mu)\cap \rho(\Delta(\mu)) $. According to Proposition \ref{S3.P2}, there is
\begin{align*}
R(\mu,\mathfrak{A})=\left(\begin{smallmatrix}
R(\mu,A_{G,M})[I+L\Gamma(\mu)R(\mu,A_{G,M})]& R(\mu,A_{G,M})LR(1,\Delta(\mu))R(\mu,\calQ_{D}) & \Lambda (\mu)R(\mu,Q^{U})\\
\Gamma(\mu)R(\mu,A_{G,M}) & R(1,\Delta(\mu))R(\mu,\calQ_{D})&\Omega (\mu)R(\mu,Q^{U})\\
0 & 0 & R(\mu,Q^{U})
\end{smallmatrix}\right)
\end{align*}
On the other hand, we have
\begin{align*}
(\mathbb{B}\calK+\calB)\left(\begin{smallmatrix}
v\\u
\end{smallmatrix}\right)=\left(\begin{smallmatrix}
BKv\\
\beta^{X}K_{0}u\\
\beta^{U}u
\end{smallmatrix}\right)+\left(\begin{smallmatrix}
B_{0}u \\
0 \\
0
\end{smallmatrix}\right)=\left(\begin{smallmatrix}
BKv+B_{0}u \\
\beta^{X}K_{0}u\\
\beta^{U}u
\end{smallmatrix}\right).
\end{align*}
Thus,

$(\mathbb{B}\calK+\calB)\left(\begin{smallmatrix}
v\\u
\end{smallmatrix}\right)=$
\begin{align*}
&\left(\begin{smallmatrix}
R(\mu,A_{G,M})[I+L\Gamma(\mu)R(\mu,A_{G,M})]& R(\mu,A_{G,M})LR(1,\Delta(\mu))R(\mu,\calQ_{D}) & \Lambda (\mu)R(\mu,Q^{U})\\
\Gamma(\mu)R(\mu,A_{G,M}) & R(1,\Delta(\mu))R(\mu,\calQ_{D})&\Omega (\mu)R(\mu,Q^{U})\\
0 & 0 & R(\mu,Q^{U})
\end{smallmatrix}\right)
\left(\begin{smallmatrix}
BKv+B_{0}u \\
\beta^{X}K_{0}u\\
\beta^{U}u
\end{smallmatrix}\right)\\
&=\left(\begin{smallmatrix}
[I+R(\mu,A_{G,M})L\Gamma(\mu)]R(\mu,A_{G,M})\big(BKv+B_{0}u\big) + R(\mu,A_{G,M})L\Gamma(\mu)K_{0}u+ \Lambda (\mu)e_\mu^{U} u\\\\
\Gamma(\mu) R(\mu,A_{G,M})\big(BKv+B_{0}u\big) + \Gamma(\mu)K_{0}u+\Omega (\mu)e_{\mu}^{U}u\\\\
e_\mu^{U} u
\end{smallmatrix}\right),
\end{align*}
where we have used the fact that $ R(\mu,\calQ_D)= (I-e_{\mu}^{X}D)^{-1}R(\mu,Q^{X}) $ and $ e_\mu^{X}(I-De_{\mu}^{X})^{-1}= (I-e_{\mu}^{X}D)^{-1} e_\mu^{X}$. Now using the same strategy as in \cite[Proposition 3]{EHR1}, we calim that (\emph{a})$\Leftrightarrow $(\emph{b}).

Therefore, according to Remark \ref{S3.R4} the system \eqref{Pe} is always $ L^{p}([-r,0],U)$-approximately controllable. Then, the statement (\emph{a}) yields the existence of a time for which system \eqref{Pe} is approximate controllability.
\end{proof}
\begin{corollary}
According to Theorem \ref{T.3}, the system \eqref{Pe} is $ X $-approximately controllable if and only if, for $ 1\in \rho(MD_\mu)\cap\rho(De_\mu^{X})  $ and $ x'\in X' $, the fact that
	\begin{align*}
	&\Big\langle [I+R(\mu,A_{G,M})L\Gamma(\mu)]R(\mu,A_{G,M})\big(BKv+B_{0}u\big) \\
	&+ R(\mu,A_{G,M})L\Gamma(\mu)K_{0}u+ \Lambda (\mu)e_\mu^{U} u,x'\Big\rangle =0,
	\end{align*}
for all $ v,u\in U $ implies that $ x'=0 $.
\end{corollary}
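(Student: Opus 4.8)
The plan is to obtain the corollary as a one-coordinate specialization of Theorem \ref{T.3}. First I would recall that $X$-approximate controllability means $Cl\big(\bigcup_{t\ge 0}P_{X}(\mathcal{R}(t))\big)=X$, and that by the Hahn--Banach theorem this density is equivalent to the assertion that the only $x'\in X'$ annihilating $\bigcup_{t\ge 0}P_{X}(\mathcal{R}(t))$ is $x'=0$. Under the identification $\calX'\cong X'\times L^{q}([-r,0],X')\times L^{q}([-r,0],U')$ one has $\langle P_{X}\zeta,x'\rangle_{X}=\langle \zeta,(x',0,0)^{\top}\rangle_{\calX}$, so annihilating the projected reachable set by $x'$ is the same as annihilating $\mathcal{R}(t)$ by the functional $(x',0,0)^{\top}$. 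In other words, $X$-approximate controllability corresponds exactly to the situation of Theorem \ref{T.3}(b) in which the second dual component is forced to vanish, i.e.\ $\varphi=0$.

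Second, I would transport this annihilation condition into the frequency domain exactly as in the proof of Theorem \ref{T.3}. Using the admissibility and regularity of $\mathbb{B}\calK+\calB$ for $\mathfrak{A}$ established in Theorem \ref{S3.T1}, the Laplace transform of the state \eqref{S3.8} equals $R(\mu,\mathfrak{A})(\mathbb{B}\calK+\calB)\big(\hat v(\mu),\hat u(\mu)\big)^{\top}$ for $\Re e\,\mu$ large, so injectivity of the Laplace transform reduces the annihilation of the reachable set to the annihilation, for all $u,v\in U$ and all $\mu$ with $1\in \rho(MD_\mu)\cap\rho(De_\mu^{X})$, of $R(\mu,\mathfrak{A})(\mathbb{B}\calK+\calB)\binom{v}{u}$ by $(x',0,0)^{\top}$. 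Pairing with $(x',0,0)^{\top}$ selects precisely the first coordinate of that vector, which was computed in the proof of Theorem \ref{T.3} to be
\[
[I+R(\mu,A_{G,M})L\Gamma(\mu)]R(\mu,A_{G,M})\big(BKv+B_{0}u\big)+R(\mu,A_{G,M})L\Gamma(\mu)K_{0}u+\Lambda(\mu)e_{\mu}^{U}u,
\]
yielding the displayed condition of the corollary with the $\varphi$-pairing absent.

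Finally, I would conclude by matching this with Theorem \ref{T.3}(b) restricted to $\varphi=0$: the resulting condition is exactly the statement that $x'$ annihilates the first-coordinate frequency-domain range, and by the duality argument above this forces $x'=0$ precisely when the system is $X$-approximately controllable. The main obstacle is the frequency-domain step, namely justifying that density of the time-domain reachable set is equivalent to the resolvent annihilation condition holding for all admissible $\mu$; this rests on the admissibility of $\mathbb{B}\calK+\calB$ and the analyticity and injectivity of the Laplace transform, i.e.\ the same mechanism invoked through \cite[Proposition 3]{EHR1} in Theorem \ref{T.3}. Once that machinery is granted, the passage to the single coordinate and the identification of $X'$ with $(x',0,0)^{\top}\in\calX'$ are purely algebraic, so the corollary follows immediately.
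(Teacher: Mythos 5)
Your proposal is correct and follows essentially the same route as the paper: the corollary is obtained as the specialization of Theorem \ref{T.3} to test functionals of the form $(x',0,0)^{\top}\in\calX'$, with the frequency-domain reduction resting on the same Laplace-transform mechanism that the paper delegates to \cite[Proposition 3]{EHR1}, and the displayed expression being exactly the first coordinate of $R(\mu,\mathfrak{A})(\mathbb{B}\calK+\calB)\left(\begin{smallmatrix}v\\ u\end{smallmatrix}\right)$ computed in the proof of Theorem \ref{T.3}. Your write-up merely makes explicit the duality identification $\langle P_{X}\zeta,x'\rangle_{X}=\langle\zeta,(x',0,0)^{\top}\rangle_{\calX}$ that the paper leaves implicit.
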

This corollary characterizes the condition when $z$, partial state of system \eqref{Pe}, can reach all points in $ X $. In general, this is irrelevant to the approximate controllability of system \eqref{Sy.1}.
\begin{corollary}
According to Theorem \ref{T.3}, the system \eqref{Pe} is $ L^{p}([-r,0],X) $-approximately controllable if and only if, for $ 1\in \rho(MD_\mu)\cap\rho(De_\mu^{X})   $ and $ \varphi\in L^{q}([-r,0],X') $, the fact that
	\begin{align}\label{Cond1}
	\Big\langle\Gamma(\mu) R(\mu,A_{G,M})\big(BKv+B_{0}u\big) + \Gamma(\mu)K_{0}u+\Omega (\mu)e_{\mu}^{U}u,\varphi \Big\rangle =0,\;\forall v,u\in U,
	\end{align}
	implies that $ \varphi=0 $.
\end{corollary}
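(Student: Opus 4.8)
The plan is to derive this corollary directly from Theorem \ref{T.3} by specializing the abstract controllability criterion to the factor space $ L^{p}([-r,0],X) $. Since Theorem \ref{T.3} characterizes $ X\times L^{p}([-r,0],X)$-approximate controllability via the vanishing of a pairing against both $ x'\in X' $ and $ \varphi\in L^{q}([-r,0],X') $, the natural approach is to observe that $ L^{p}([-r,0],X)$-approximate controllability corresponds to testing against the second component alone. Concretely, by Definition \ref{S3.D1} the system is $ L^{p}([-r,0],X) $-approximately controllable precisely when $ Cl\big(\bigcup_{t\geq 0}P_{L^{p}([-r,0],X)}\mathcal{R}(t)\big)=L^{p}([-r,0],X) $, and by a standard duality argument (the closure of the range of the reachability operator is dense if and only if its adjoint is injective) this is equivalent to the statement that any $ \varphi\in L^{q}([-r,0],X') $ annihilating the second component of every reachable state must vanish.

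First I would invoke the frequency-domain reformulation already carried out in the proof of Theorem \ref{T.3}: there the action of $ R(\mu,\mathfrak{A})(\mathbb{B}\calK+\calB)\left(\begin{smallmatrix}v\\u\end{smallmatrix}\right) $ was computed componentwise, yielding in the second slot exactly the expression
\begin{align*}
\Gamma(\mu) R(\mu,A_{G,M})\big(BKv+B_{0}u\big) + \Gamma(\mu)K_{0}u+\Omega (\mu)e_{\mu}^{U}u.
\end{align*}
Thus the pairing in \eqref{Cond1} is precisely the pairing of this second component against $ \varphi $. Testing the reachability against the dual element $ (0,\varphi,0)^{\top}\in\calX' $ and using the Laplace-transform characterization of the reachable subspace (the same mechanism that produced the resolvent expression via \eqref{S2.6}) converts the orthogonality condition into the frequency-domain identity \eqref{Cond1}, valid for all admissible $ \mu $ with $ 1\in \rho(MD_\mu)\cap\rho(De_\mu^{X}) $.

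The key step, then, is to justify that setting $ x'=0 $ in the criterion of Theorem \ref{T.3}(b) isolates exactly the $ L^{p}([-r,0],X) $-component: this follows because the projection $ P_{L^{p}([-r,0],X)} $ dualizes to the embedding $ \varphi\mapsto(0,\varphi,0)^{\top} $ of $ L^{q}([-r,0],X') $ into $ \calX' $, so annihilating the reachable set in the $ L^{p}([-r,0],X) $-factor is the same as requiring the full pairing of Theorem \ref{T.3}(b) to vanish with the $ X' $-slot switched off. I expect the main obstacle to be a bookkeeping subtlety rather than a deep difficulty: one must check that the reachable states in the second component, as $ (v,u) $ range over $ U\times U $ and $ \mu $ over the resolvent set, are rich enough that density of $ P_{L^{p}([-r,0],X)}\mathcal{R} $ is genuinely equivalent to the injectivity expressed by \eqref{Cond1}. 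This hinges on the same completeness/analyticity argument referenced from \cite[Proposition 3]{EHR1} in the proof of Theorem \ref{T.3}, applied now to a single factor; once that equivalence is granted, the corollary is immediate.
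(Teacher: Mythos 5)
Your proposal is correct and follows exactly the route the paper intends: the paper states this corollary without separate proof as a direct specialization of Theorem \ref{T.3}, and your argument---testing the reachable set against dual elements of the form $(0,\varphi,0)^{\top}$, identifying the pairing in \eqref{Cond1} with the second component of $R(\mu,\mathfrak{A})(\mathbb{B}\mathcal{K}+\mathcal{B})\left(\begin{smallmatrix}v\\u\end{smallmatrix}\right)$ as computed in the proof of Theorem \ref{T.3}, and invoking the same duality/Laplace-transform mechanism from \cite[Proposition 3]{EHR1} for the single factor---is precisely that implicit reasoning. You also rightly flag that the single-factor equivalence requires rerunning the density argument rather than formally setting $x'=0$ in the statement of Theorem \ref{T.3}(b), which is the only point where care is needed, and you handle it correctly.
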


This corollary actually describes the approximate controllability of the system \eqref{Sy.1}.
\begin{remark}\label{S3.R5}
For $ \mu\in \rho(A) $ such that $ 1\in \rho(MD_\mu)\cap\rho(De_\mu^{X})  $, we denote by
$$\Xi(\mu)=(I-De_{\mu}^{X}-R(\mu,A_{G,M})Le_{\mu}^{X})^{-1}.$$
A direct computation, according to Proposition \ref{S3.P2}, it can be shown that
$$\Gamma(\mu):=R(1,\Delta(\mu))e_{\mu}^{X}(I-De_{\mu}^{X})^{-1} =e_{\mu}^{X}\Xi(\mu).$$
\end{remark}
With the notation of the above remark we obtain:
\begin{theorem}\label{T3.4}
The abstract boundary control system of neutral type \eqref{Sy.1} is approximately contrllable if and only if, for $1\in \rho(MD_\mu)\cap\rho(De_\mu^{X}) $ and $ x'\in X' $, the fact that
$$
	\Big\langle \Xi(\mu) \Big(R(\mu,A_{G,M})\big(BKv+B_{0}u+B_{1}e_{\mu}^{U}u\big)+K_{0}u+K_{1}e_{\mu}^{U}u \Big),x'\Big\rangle =0
$$
for all $ v,u\in U $, implies that $ x'=0 $.
\end{theorem}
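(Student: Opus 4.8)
The plan is to deduce this statement from the $L^{p}([-r,0],X)$-approximate controllability already characterised in the corollary following Theorem \ref{T.3} (condition \eqref{Cond1}), and then to convert the functional condition over $L^{q}([-r,0],X')$ into one over $X'$. Recall from the discussion there that the approximate controllability of \eqref{Sy.1} is precisely the $L^{p}([-r,0],X)$-approximate controllability of \eqref{Pe}, which by \eqref{Cond1} says that the only $\varphi\in L^{q}([-r,0],X')$ annihilating every reachable direction $\Gamma(\mu)R(\mu,A_{G,M})(BKv+B_{0}u)+\Gamma(\mu)K_{0}u+\Omega(\mu)e_{\mu}^{U}u$ (for admissible $\mu$ and all $u,v\in U$) is $\varphi=0$. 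The first step is therefore purely algebraic: I would substitute $\Omega(\mu)=\Gamma(\mu)(K_{1}+R(\mu,A_{G,M})B_{1})$ from Proposition \ref{S3.P2}, factor $\Gamma(\mu)$ out of all three terms, and apply the identity $\Gamma(\mu)=e_{\mu}^{X}\Xi(\mu)$ from Remark \ref{S3.R5}. This rewrites each reachable direction as the pure exponential mode $e_{\mu}^{X}h(\mu,u,v)$, where
$$
h(\mu,u,v):=\Xi(\mu)\Big(R(\mu,A_{G,M})(BKv+B_{0}u+B_{1}e_{\mu}^{U}u)+K_{0}u+K_{1}e_{\mu}^{U}u\Big)\in X
$$
is exactly the vector inside the pairing in the statement.

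With this reformulation the theorem becomes an equivalence between two totality statements: \eqref{Cond1} says that $\{e_{\mu}^{X}h(\mu,u,v)\}$ is total in $L^{p}([-r,0],X)$, whereas the asserted criterion says that $\{h(\mu,u,v)\}$ is total in $X$. One implication is immediate and handles the ``only if'' direction: if some $x'\neq 0$ annihilates every $h(\mu,u,v)$, then the constant profile $\varphi\equiv x'$ satisfies $\langle e_{\mu}^{X}h(\mu,u,v),\varphi\rangle=\int_{-r}^{0}e^{\mu\theta}\langle h(\mu,u,v),x'\rangle\,d\theta=0$ for all $\mu,u,v$ and is nonzero, so \eqref{Cond1} fails; hence approximate controllability forces the stated $X'$-condition.

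The converse — that totality of the amplitudes $\{h(\mu,u,v)\}$ in $X$ forces totality of the modes $\{e_{\mu}^{X}h(\mu,u,v)\}$ in $L^{p}([-r,0],X)$ — is the heart of the matter, and I expect it to be the main obstacle. The difficulty is that the spectral parameter $\mu$ sits \emph{simultaneously} inside the amplitude $h(\mu,u,v)$ and inside the exponential weight $e^{\mu\theta}$, so one cannot simply peel off the $\theta$-dependence by Laplace uniqueness. The mechanism I would exploit is the left-shift structure of the history dynamics: the second component of \eqref{S3.1.Sy1} is governed by $Q_{m}^{X}$, and for any fixed amplitude $h\neq 0$ the nilpotent left shift spreads a single mode across the whole delay window, since $\overline{\mathrm{span}}\{S^{X}(t)e_{\mu}^{X}h:\ t\in[0,r]\}=\{g\cdot h:\ g\in L^{p}([-r,0])\}$ (the truncated exponentials $e^{\mu\theta}\1_{[-r,-t]}$ being total in $L^{p}([-r,0])$). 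Using the invariance of the reachable subspace under the generated semigroup — equivalently, testing the annihilation against the dual (right-shift) semigroup and differentiating the analytic map $\mu\mapsto e_{\mu}^{X}h(\mu,u,v)$ on the open set of admissible $\mu$ — one upgrades the reachable history set to $L^{p}([-r,0])\otimes\overline{\mathrm{span}}\{h(\mu,u,v)\}$, which is all of $L^{p}([-r,0],X)$ exactly when the amplitudes are total in $X$. Making this ``spreading'' rigorous — reconciling the fact that the reachable set is invariant only under the fully coupled semigroup with the clean shift-invariance used above — is the step requiring the most care, and is where I would focus the technical work.
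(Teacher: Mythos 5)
Your reduction of the theorem to condition \eqref{Cond1}, and the algebraic rewriting via $\Omega(\mu)=\Gamma(\mu)\big(K_1+R(\mu,A_{G,M})B_1\big)$ together with $\Gamma(\mu)=e_\mu^X\Xi(\mu)$ from Remark \ref{S3.R5}, coincide exactly with the paper's first step, and your constant-profile argument for the ``only if'' half is sound (indeed more explicit than what the paper writes). The genuine gap is the converse, which you yourself leave open, and the mechanism you sketch for it would not work as described: the reachable subspace of \eqref{Pe} is invariant only under the coupled semigroup $\mathfrak{U}(t)$, and because of the coupling $\varphi(0)=\varrho_0+D\varphi+K_1\psi$ built into $D(\mathcal{A}_{G,M})$, the free left shift $S^{X}(t)$ acting on the middle component is \emph{not} a restriction or factor of $\mathfrak{U}(t)$; hence the spreading identity $\overline{\mathrm{span}}\{S^{X}(t)e_\mu^X h:\ t\in[0,r]\}=\{g\cdot h:\ g\in L^{p}([-r,0])\}$ cannot be applied to reachable vectors. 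This is precisely the reconciliation you defer to ``technical work,'' so the hard direction remains unproved in your proposal.

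The paper's proof of that direction is dual and avoids spreading altogether: move the exponential onto the functional. For $\varphi\in L^{q}([-r,0],X')$ one has $\langle e_\mu^X\Xi(\mu)w,\varphi\rangle=\langle \Xi(\mu)w,(e_\mu^X)^{\top}\varphi\rangle$ with $(e_\mu^X)^{\top}\varphi=\int_{-r}^{0}e^{\mu\theta}\varphi(\theta)\,d\theta\in X'$. If $\varphi$ annihilates every reachable direction in \eqref{Cond1}, then for each admissible $\mu$ the functional $x'_\mu:=(e_\mu^X)^{\top}\varphi$ annihilates all the amplitudes $h(\mu,u,v)$ of your notation; the stated $X'$-condition---which the paper applies \emph{frequency-wise}, i.e.\ at each fixed admissible $\mu$ separately, as the per-$\mu$ rank condition of Theorem \ref{T.4} confirms---then gives $x'_\mu=0$ for every such $\mu$. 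Since $\mu\mapsto\int_{-r}^{0}e^{\mu\theta}\varphi(\theta)\,d\theta$ is entire and the admissible set $\{\mu:\ 1\in\rho(MD_\mu)\cap\rho(De_\mu^X)\}$ contains a set with accumulation points (for $\Re\,\mu$ large the operators $De_\mu^X$ and $MD_\mu$ have small norm), uniqueness of the finite Laplace transform forces $\varphi=0$. Note that your reading of the hypothesis---a single $x'$ tested against all $\mu$ simultaneously---is weaker than this frequency-wise reading, and it is exactly that reading which created the obstacle you describe ($\mu$ sitting both in the amplitude and in the exponential): under the paper's reading the converse collapses to Laplace uniqueness plus the adjoint substitution, with no semigroup-invariance argument needed. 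The paper compresses all of this into ``replacing $(e_\mu^X)^{\top}\varphi$ with $x'$,'' which is terse, but it is the idea your proposal is missing.
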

\begin{proof}
The state $x(.)$ of the system \eqref{Sy.1} is approximately controllable if and only if the system \eqref{Pe} is $ L^{p}([-r,0],X) $-approximately controllable, i.e., the state $ x_{t} $ is approximately controllable. Under the notation of Remark \ref{S3.R5}, the condition \eqref{Cond1} is equivalent to
$$
\Big\langle \Xi(\mu) \Big(R(\mu,A_{G,M})\big(BKv+B_{0}u+B_{1}e_{\mu}^{U}u\big)+K_{0}u+K_{1}e_{\mu}^{U}u\Big),(e_{\mu}^{X})^{\top}\varphi \Big\rangle=0 ,
$$
for all $ u,v\in U $, with $ (e_{\mu}^{X})^{\top}\varphi\in X' $, since $\Omega (\mu)=\Gamma(\mu) [K_{1}+R(\mu,A_{G,M}) B_{1}]$; see Proposition \ref{S3.P2}. Replacing $ (e_{\mu}^{X})^{\top}\varphi $ with $ x '$ results in the conclusion.
\end{proof}
\subsection{A Rank Condition for a Special Case in Control Spaces}\label{S4.Sub2}
In this subsection, we study approximate controllability of neutral delay systems when the control space is finite dimensional. We establish a novel rank condition criteria for approximate controllability of a such class of system.

Before going further and stating the main result of this subsection, we need to introduce some notations. We start with the assumption on the control space $ U=\mathbb{C}^{n}  $. Then, $ K_{0} $ and $ B_{0} $ are finite-rank operators defined by
\begin{align*}
Ku=\displaystyle\sum_{l=1}^{n}K_{l} v_l,\qquad  K_{0}u=\displaystyle\sum_{l=1}^{n}K_{0,l} u_l\qquad \text{and} \qquad B_{0}u=\displaystyle\sum_{l=1}^{n}B_{0,l} u_l,
\end{align*}
where $ K_{0,l},B_{0,l} \in X $. Moreover, denote $ K_{1}$ and $B_{1} $ as
\begin{align*}
K_{1}=\left( K_{1,1},K_{1,2},\ldots ,K_{1,n}\right) \qquad \text{and} \qquad  B_{1}=\left( B_{1,1},B_{1,2},\ldots ,B_{1,n}\right),
\end{align*}
such as $ K_{1,l},B_{1,l} :W^{1,p}([-r,0],\mathbb{C})\longrightarrow X $ for $ l=1,\ldots, n$. Therefore, for $ u=(u_{1},\ldots, u_{n})^{\top}\in \mathbb{C}^{n} $ and $ \mu \in\mathbb{C} $, we have
\begin{align}\label{S3.9}
\begin{split}
K_{1}e_{\mu}^{U}u&=\sum_{l=1}^{n}K_{1,l}e_{\mu}u_{l}=\sum_{l=1}^{n}u_{l}K_{1,l}e_{\mu}1
\\
B_{1}e_{\mu}^{U}u&=\sum_{l=1}^{n}B_{1,l}e_{\mu}u_{l}=\sum_{l=1}^{n}u_{l}B_{1,l}e_{\mu}1
\end{split}
\end{align}
with $ (e_{\mu}1)(\theta)=e^{\mu \theta} $ for $ \theta\in [-r,0] $.

Throughout the following we denote the orthogonal space of a set $ F $ in $ X $ by
\begin{align*}
F^{\bot}=\{x'\in X';\; \langle y,x'\rangle=0,\; \;\forall y\in F\}.
\end{align*}
\begin{remark}\label{S4.R1}
In view of Theorem \ref{T3.4}, the abstract boundary control system of neutral type \eqref{Sy.1} is approximately controllable if and only if, for $1\in \rho(MD_\mu)\cap\rho(De_\mu)\cap \rho(\Delta(\mu)) $,
\begin{align}\label{S3.11}
\resizebox{0.9\hsize}{!}{$\overline{\Big(\Xi(\mu) D_{\mu}(I-MD_\mu)^{-1}K\mathbb{C}^{n}\Big)+\Big(\Xi(\mu) \big(K_{0}+K_{1}e_{\mu}^{U}+R(\mu,A_{G,M})(B_{0}+B_{1}e_{\mu}^{U})\big)\mathbb{C}^{n}\Big)}=X$}.
\end{align}
In fact,
\begin{align*}
R(\mu,A_{G,M})BK &=(I+D_\mu(I-MD_\mu)^{-1}M)R(\mu,A)BK \\
&= (I+D_\mu(I-MD_\mu)^{-1}M)D_\mu K\\
&= D_{\mu}(I-MD_\mu)^{-1}K.
\end{align*}
Moreover, according to \cite[Proposition 2.14.]{Brez}, the above fact is equivalent to that
\begin{align}\label{ker}
\resizebox{0.9\hsize}{!}{$\Big(\Xi(\mu) D_{\mu}(I-MD_\mu)^{-1}K\mathbb{C}^{n}\Big)^{\bot}\cap\Big(\Xi(\mu) \big(K_{0}+K_{1}e_{\mu}^{U}+R(\mu,A_{G,M})(B_{0}+B_{1}e_{\mu}^{U})\big)\mathbb{C}^{n}\Big)^{\bot}=\{0\}$}.
\end{align}
\end{remark}

Next we provide a useful characterization of approximate controllability for system \eqref{Sy.1}. To this end, we denotes by $d_{l}$ the dimension of
$$ \Upsilon_l(\mu):=\Big(\Xi(\mu) D_{\mu}(I-MD_\mu)^{-1}K_{l}\Big)^{\bot},\; \; l=1,\ldots,n, $$
and by $ (\varphi^{1}_{l},\varphi^{2}_{l},\ldots,\varphi^{d_{l}}_{l} )$ the associated basis.
	
In view of Remark \ref{S4.R1}, the statement in Theorem \ref{T3.4} is equivalent to the following theorem:
\begin{theorem}\label{T.4}
Assume that the control space is finite dimensional. Then the neutral delay system \eqref{Sy.1} is approximately controllable if and only if, for $1\in \rho(MD_\mu)\cap\rho(De_\mu^{X})  $,
\begin{align}\label{Rank}
\textbf{Rank}\left(\begin{smallmatrix}
\langle \Xi(\mu)\Pi'_1(\mu)+\Xi(\mu)\Pi_1(\mu)e_{\mu}1,\varphi^{1}_{l}\rangle  &\cdots  &\langle \Xi(\mu)\Pi'_1(\mu)+\Xi(\mu)\Pi_1(\mu)e_{\mu}1,\varphi^{d_{l}}_{l}\rangle\\
	\langle \Xi(\mu)\Pi'_2(\mu)+\Xi(\mu)\Pi_2(\mu)e_{\mu}1,\varphi^{1}_{l}\rangle   &\cdots &  \langle \Xi(\mu)\Pi'_2(\mu)+\Xi(\mu)\Pi_2(\mu)e_{\mu}1,\varphi^{d_{l}}_{l}\rangle \\
	\vdots  & &  \vdots \\
	\langle \Xi(\mu)\Pi'_l(\mu)+\Xi(\mu)\Pi_l(\mu)e_{\mu}1,\varphi^{1}_{l}\rangle  & \ldots & \langle \Xi(\mu)\Pi'_l(\mu)+\Xi(\mu)\Pi_l(\mu)e_{\mu}1,\varphi^{d_{l}}_{l}\rangle
	\end{smallmatrix}\right)=d_l,
\end{align}
for $ l=1,\ldots,n $, where
\begin{align*}
\Pi_l(\mu)&:=R(\mu,A_{G,M})B_{1,l}+K_{1,l} \\
\Pi'_l(\mu)&:=R(\mu,A_{G,M})B_{0,l}+K_{0,l} .
\end{align*}
\end{theorem}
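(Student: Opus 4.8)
The plan is to start from the frequency-domain criterion of Theorem \ref{T3.4} and convert it, in the finite-dimensional control setting $U=\C^n$, into the algebraic rank condition \eqref{Rank}. By Theorem \ref{T3.4} the system \eqref{Sy.1} is approximately controllable exactly when, for admissible $\mu$ (that is, $1\in\rho(MD_\mu)\cap\rho(De_\mu^X)$), the only functional $x'\in X'$ annihilating the whole reachable family
$$\Xi(\mu)\Big(R(\mu,A_{G,M})(BKv+B_0u+B_1e_\mu^Uu)+K_0u+K_1e_\mu^Uu\Big),\qquad u,v\in\C^n,$$
is $x'=0$. First I would split this family into its $v$-part and its $u$-part and use $U=\C^n$ to express it through the finitely many generators $\Xi(\mu)D_\mu(I-MD_\mu)^{-1}K_l$ (invoking the identity $R(\mu,A_{G,M})BK=D_\mu(I-MD_\mu)^{-1}K$ from Remark \ref{S4.R1}) and $\Xi(\mu)\big(\Pi'_l(\mu)+\Pi_l(\mu)e_\mu1\big)$, for $l=1,\dots,n$, where $\Pi_l,\Pi'_l$ are as in the statement and \eqref{S3.9} is used to expand $K_1e_\mu^U,B_1e_\mu^U$ coordinatewise.

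With this decomposition the criterion becomes the triviality of the joint annihilator \eqref{ker}, namely $A^\bot\cap B^\bot=\{0\}$ with $A$ the span of the boundary directions and $B$ the span of the input directions; equivalently $A+B$ is dense in $X$, which is the content of Remark \ref{S4.R1}. The next step is to resolve the first membership $x'\in A^\bot=\bigcap_{l}\Upsilon_l(\mu)$ through the chosen bases: for each $l$, the relation $x'\in\Upsilon_l(\mu)$ lets me expand $x'=\sum_{j=1}^{d_l}c_j\varphi^j_l$, so that imposing in addition $x'\in B^\bot$ turns the orthogonality relations $\langle\Xi(\mu)(\Pi'_i(\mu)+\Pi_i(\mu)e_\mu1),x'\rangle=0$ into a homogeneous linear system in the $c_j$ whose coefficient matrix is precisely the one displayed in \eqref{Rank}. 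Triviality of the annihilator is then equivalent to this system admitting only $c=0$, i.e.\ to the matrix having full column rank $d_l$; running this over $l=1,\dots,n$ yields the stated equivalence.

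The main obstacle I anticipate is the bookkeeping that makes these finitely many scalar rank tests jointly equivalent to the single infinite-dimensional density statement: one must check that expanding $x'$ in each $\Upsilon_l$-basis and testing against the input generators loses no information, and that the triangular range of rows ($i=1,\dots,l$) together with the column count $d_l$ faithfully encodes $A^\bot\cap B^\bot=\{0\}$ rather than a strictly weaker condition. Two auxiliary identities underpin the reduction and should be verified explicitly beforehand: the collapse of the two pairings of Theorem \ref{T.3} into the single one of Theorem \ref{T3.4}, via $\Omega(\mu)=\Gamma(\mu)(K_1+R(\mu,A_{G,M})B_1)$ and $\Gamma(\mu)=e_\mu^X\Xi(\mu)$ from Remark \ref{S3.R5}, and the resolvent reduction $R(\mu,A_{G,M})BK=D_\mu(I-MD_\mu)^{-1}K$. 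Everything else is the annihilator duality already invoked in Remark \ref{S4.R1}.
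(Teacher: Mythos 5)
Your proposal follows essentially the same route as the paper: reduce via Theorem \ref{T3.4} and the identity $R(\mu,A_{G,M})BK=D_\mu(I-MD_\mu)^{-1}K$ to the annihilator condition \eqref{ker} of Remark \ref{S4.R1}, use linearity in $u$ (i.e.\ \eqref{S3.9}, yielding \eqref{Ort}) to replace the input directions by the finitely many generators $\Xi(\mu)\big(\Pi'_l(\mu)+\Pi_l(\mu)e_\mu 1\big)$, and then expand $x'$ in the basis $(\varphi^1_l,\ldots,\varphi^{d_l}_l)$ of $\Upsilon_l(\mu)$ so that orthogonality becomes the homogeneous system with coefficient matrix \eqref{Rank}. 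Your phrasing of this last step as ``only the trivial solution $c=0$, i.e.\ full column rank $d_l$'' is exactly the contrapositive of the paper's contradiction argument, and the bookkeeping concern you flag (whether the per-$l$ rank tests jointly encode $A^\bot\cap B^\bot=\{0\}$ and nothing weaker) is present, unresolved, in the paper's own proof as well, whose converse direction is only asserted to follow ``in a similar fashion.''
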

\begin{proof}
First, using the fact that
\begin{align*}
\langle \Xi(\mu)\Pi'_l(\mu)u+\Xi(\mu)\Pi_l(\mu)e_{\mu}u,x'\rangle=\sum_{l=1}^{n} \bar{u}_{l}\big\langle \Xi(\mu)\Pi'_l(\mu)+\Xi(\mu)\Pi_l(\mu)e_{\mu}1,x'\big\rangle,
\end{align*}
we promptly obtain the following:
\begin{align}\label{Ort}
	\resizebox{0.98\hsize}{!}{$\Big(\Xi(\mu) \big(K_{0}+K_{1}e_{\mu}^{U}+R(\mu,A_{G,M})(B_{0}+B_{1}e_{\mu}^{U})\big)\mathbb{C}^{n}\Big)^{\bot}= \Big( \big\{ \Xi(\mu)\Pi'_l(\mu)+\Xi(\mu)\Pi_l(\mu)e_{\mu}1: l=1,\cdots, n\big\}\Big)^{\bot}$}.
	\end{align}
According to Remark \ref{S4.R1}, to prove the claim of the theorem it suffice to prove that the conditions \eqref{ker} and \eqref{Rank} are equivalent. To this end, we denote by $ \mathsf{M}_{l} $ the matrix appearing in \eqref{Rank} and assume that it is not of rank $ d_l $. Then, there exist $ v=(v_{1},\cdots,v_{d_{l}}) \in \mathbb{C}^{d_{l}}\setminus \{0\}$ such that $ \mathsf{M}_{l}v=0 $, i.e,
\begin{align}\label{Ker}
\left(\begin{smallmatrix}
	\sum_{j=1}^{d_{l}}\bar{v}_{j}\left\langle \Xi(\mu)\Pi'_1(\mu)+\Xi(\mu)\Pi_1(\mu)e_{\mu}1,\varphi^{j}_{l}\right\rangle \\
	\sum_{j=1}^{d_{l}}\bar{v}_{j}\left\langle\Xi(\mu)\Pi'_2(\mu)+\Xi(\mu)\Pi_2(\mu)e_{\mu}1,\varphi^{j}_{l}\right\rangle \\
	\vdots \\
	\sum_{j=1}^{d_{l}}\bar{v}_{j}\left\langle\Xi(\mu)\Pi'_l(\mu)+\Xi(\mu)\Pi_l(\mu)e_{\mu}1,\varphi^{j}_{l}\right\rangle
\end{smallmatrix}\right)=\left(\begin{smallmatrix}
	0 \\
	0 \\
	\vdots \\
	0
\end{smallmatrix}\right).
\end{align}	
As $ (\varphi^{1}_{l},\varphi^{2}_{l},\ldots,\varphi^{d_{l}}_{l} )$ is a basis of $\Upsilon_l(\mu)$, we obtain
$$ \sum_{j=1}^{d_{l}}\bar{v}_{j} \varphi^{j}_{l} \in \Upsilon_l(\mu) .$$ On other hand, because of \eqref{Ort} and \eqref{Ker} one can see that
$$ \sum_{j=1}^{d_{l}}\bar{v}_{j} \varphi^{j}_{l}\in \Big(  \Xi(\mu)\Pi'_l(\mu)+\Xi(\mu)\Pi_l(\mu)e_{\mu}1\Big)^{\bot} .$$
This is a contradiction, which show that the condition \eqref{ker} implies \eqref{Rank}. The converse is demonstrated in a similar fashion by using the expression \eqref{Ort} and the basis $ (\varphi^{1}_{l},\varphi^{2}_{l},\ldots,\varphi^{d_{l}}_{l} )$.	The proof is completed.
\end{proof}

\begin{example} Consider the perturbed boundary control time-delay system
	\begin{eqnarray}\label{state-inp}
	\dot{z}(t)&=&A_m z(t)+ P z(t-r)+ N u(t-r) + B u(t),\qquad t\geq 0,\nonumber\\
	Gz(t)&=& Mz(t)+ Kv(t),\qquad t\geq 0,\\
	z(0)&=&z_0, z(\theta)=\varphi, u(\theta)=\psi,\qquad \text{for a.e. }\theta\in[-r,0]\nonumber.
	\end{eqnarray}
Here $A_m:D(A_m)\subset X\longrightarrow X$ is a closed, linear differential operator on a reflexive Banach space $ X $ and $ G,M: D(A_m)\longrightarrow\partial X$ are unbounded trace operators. $ B,N:\mathbb{C}^{n} \longrightarrow X$ are linear bounded operators and $ K:\mathbb{C}^{n} \longrightarrow \partial X $. This system can be obtained from the system \eqref{Sy.1} by imposing $ D=0 $, $ K_{0}=0 $, $ K_{1}=0 $, $ L=P\delta_{-r} $, and $ B_{1}=N\delta_{-r} $, where $ \delta_{-r} $ is the Dirac operator. Thus, according to Theorem \ref{S3.T1}, the following holds.
\begin{corollary}
Let $ A_m,G,M $ be satisfies the assumptions \textbf{(A1)}-\textbf{(A3)}. Then the perturbed boundary control time-delay system \eqref{state-inp} is wellposed.
\end{corollary}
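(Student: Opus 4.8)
The plan is to read \eqref{state-inp} as the special instance of the abstract neutral system \eqref{Sy.1} already identified in the example, namely $D=0$, $K_{0}=0$, $K_{1}=0$, $L=P\delta_{-r}$ and $B_{1}=N\delta_{-r}$ (with $B_{0}=B$ the distributed input and $K$ the boundary input), and then to check that these data meet the standing hypotheses of Theorem \ref{S3.T1}. Once this is done, well-posedness follows in three moves: $\mathfrak{A}=\mathcal{A}_{G,M}+\calP$ generates a strongly continuous semigroup on $\calX$ by Theorem \ref{S3.T1}; the instantaneous inputs assemble into the admissible control operator $\mathbb{B}\calK+\calB$ of the reformulation \eqref{Pe}; and the variation-of-constants formula \eqref{S3.8} produces the unique strong solution via \cite[Theorem 4.3]{HMR}.

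First I would verify that the chosen delay operators belong to the structural class of Section \ref{sec:2}. Writing point evaluation as a Riemann--Stieltjes integral gives $L\varphi=P\varphi(-r)=\int_{-r}^{0}d\gamma(\theta)\varphi(\theta)$ and $B_{1}\psi=N\psi(-r)=\int_{-r}^{0}d\nu(\theta)\psi(\theta)$, where $\gamma:[-r,0]\to\mathcal{L}(X)$ and $\nu:[-r,0]\to\mathcal{L}(U,X)$ are the functions of bounded variation carrying a single jump, of size $P\in\mathcal{L}(X)$ and $N\in\mathcal{L}(\mathbb{C}^{n},X)$ respectively, at $\theta=-r$. Since this atom sits at $-r$, away from the origin, one has $\vert\gamma\vert([-\varepsilon,0])=\vert\nu\vert([-\varepsilon,0])=0$ for every $\varepsilon<r$, so the vanishing-total-variation-at-zero condition imposed in \eqref{Sy.1} holds automatically; the operators $D=K_{0}=K_{1}=0$ satisfy every requirement trivially. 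Moreover, point evaluation at $-r$ is continuous on $W^{1,p}$, whence $L\in\mathcal{L}(W^{1,p}([-r,0],X);X)$ and $B_{1}\in\mathcal{L}(W^{1,p}([-r,0],U);X)$, as the framework requires.

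Next I would invoke the admissibility and regularity of such bounded-variation delay operators for the left-shift semigroups. By \cite[Theorem 3]{HIR}, $L$ and $B_{1}$ are admissible observation operators for $Q^{X}$ and $Q^{U}$ respectively, and the triples $(Q^{X},\beta^{X},L)$ and $(Q^{U},\beta^{U},B_{1})$ are regular state-space operators with the identity as admissible feedback. Together with assumptions \textbf{(A1)}--\textbf{(A3)} on the triple $(A,B,C)$ built from $A_{m},G,M$, this places all the hypotheses of Proposition \ref{S3.P1} and Theorem \ref{S3.T1} at our disposal, so that $\mathfrak{A}=\mathcal{A}_{G,M}+\calP$ generates a $C_{0}$-semigroup $(\mathfrak{U}(t))_{t\geq 0}$ on $\calX$.

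Finally, I would close the argument by feeding in the inputs: $B_{0}=B$ and the boundary operator $K$ assemble, exactly as in \eqref{Pe}, into the admissible control operator $\mathbb{B}\calK+\calB$ for $\mathfrak{A}$, and combining Theorem \ref{S3.T1} with \cite[Theorem 4.3]{HMR} yields the unique solution \eqref{S3.8}, i.e. well-posedness of \eqref{state-inp}. I expect no serious obstacle here: the corollary is essentially a dictionary translation of Theorem \ref{S3.T1} to the data of \eqref{state-inp}, and the only point needing attention---the admissibility and regularity of the point-delay terms $P\delta_{-r}$ and $N\delta_{-r}$---is automatic precisely because their single atom lies at $-r$ rather than at $0$, so the non-atomicity-at-zero hypothesis is met without any estimate.
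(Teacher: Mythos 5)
Your proposal is correct and follows exactly the route the paper takes: the paper derives this corollary simply by substituting $D=0$, $K_{0}=0$, $K_{1}=0$, $L=P\delta_{-r}$, $B_{1}=N\delta_{-r}$ into \eqref{Sy.1} and citing Theorem \ref{S3.T1}. Your only addition is to spell out the (routine but worthwhile) verification that the paper leaves implicit --- that the point-evaluation kernels are of bounded variation with their single atom at $\theta=-r$, so the vanishing-variation-at-zero hypothesis and the admissibility/regularity conclusions of \cite[Theorem 3]{HIR} apply --- which makes your write-up a fleshed-out version of the same argument rather than a different one.
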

In this case, for $\mu \in \rho(A)\cap\rho(A_{G,M}+e^{-r\mu}M)  $, we have
$$
	\Xi(\mu)=(\mu I-A_{G,M}-e^{-r\mu}M)^{-1},
$$
and
\begin{align*}
Kv&=\sum_{i=1}^{n}K_{i}v^i,\; K_{i}\in X \qquad	Bu=\sum_{i=1}^{n}B_{i}u^i,\; B_{i}\in X
\\
Nu_t&=\sum_{i=1}^{n}N_{i}u_t^i,\; N_{i}\in X.
\end{align*}
\begin{corollary}
According to Theorem \ref{T.4}, the system \eqref{state-inp} is approximately controllable if and only if, for $\mu \in \rho(A)\cap\rho(A_{G,M}+e^{-r\mu}M)  $,
\begin{align*}
\textbf{Rank}\left(\begin{smallmatrix}
	\langle \Xi(\mu)(N_1+B_{1} e^{-r\mu}),\psi^{1}_{i} \rangle & \langle \Xi(\mu)(N_1+B_{1} e^{-r\mu}),\psi^{2}_{i} \rangle &\cdots  &\langle \Xi(\mu)(N_1+B_{1} e^{-r\mu}),\psi^{d_{i}}_{i} \rangle\\
	\langle \Xi(\mu)(N_2+B_{2} e^{-r\mu}),\psi^{1}_{i}\rangle & \langle \Xi(\mu)(N_2+B_{2} e^{-r\mu}),\psi^{2}_{i}\rangle &\cdots & \langle \Xi(\mu)(N_2+B_{2} e^{-r\mu}),\psi^{d_{i}}_{i} \rangle\\
	\vdots & & \vdots & \vdots\\
	\langle \Xi(\mu)(N_l+B_{l} e^{-r\mu}),\psi^{1}_{i}\rangle & \langle \Xi(\mu)(N_l+B_{l} e^{-r\mu}),\psi^{2}_{i}\rangle &\cdots & \langle \Xi(\mu)(N_l+B_{k} e^{-r\mu}),\psi^{d_{i}}_{i} \rangle
	\end{smallmatrix}\right)=d_{i}.
\end{align*}
Where $ d_{i}=\text{ dim }\big(\Xi(\mu) D_{\mu}(I-MD_\mu)^{-1}K_{i}\big)^{\bot} $ and $ (\psi^{1}_{i},\psi^{2}_{i},\ldots,\psi^{d_{i}}_{i} )$ is the associated basis, for $ i=1,2, \ldots,n$.
\end{corollary}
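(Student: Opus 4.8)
The plan is to read off this Corollary as the specialization of Theorem~\ref{T.4} to the data $D=0$, $K_0=0$, $K_1=0$, $L=P\delta_{-r}$ and $B_1=N\delta_{-r}$, with finite-dimensional input space $U=\mathbb{C}^n$. The hypotheses \textbf{(A1)}--\textbf{(A3)} are assumed, so the preceding Corollary already grants well-posedness and the full abstract apparatus applies; in particular the equivalence between approximate controllability and the rank condition is inherited directly from Theorem~\ref{T.4}. The entire task therefore reduces to evaluating the operators $\Pi_l(\mu)$, $\Pi'_l(\mu)$ and the prefactor $\Xi(\mu)$ of \eqref{Rank} under these substitutions and checking that the displayed matrix results.

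First I would dispose of the vanishing channels. Because $K_0=0$ and $K_1=0$, their componentwise pieces $K_{0,l}$ and $K_{1,l}$ vanish, so that $\Pi'_l(\mu)=R(\mu,A_{G,M})B_{0,l}=R(\mu,A_{G,M})B_l$ is the purely instantaneous contribution while $\Pi_l(\mu)=R(\mu,A_{G,M})B_{1,l}$ is governed solely by the point delay $B_1=N\delta_{-r}$. Writing the latter componentwise as $B_{1,l}=N_l\delta_{-r}$ and evaluating on the exponential $e_\mu 1$ with $(e_\mu 1)(-r)=e^{-r\mu}$ gives $\Pi_l(\mu)e_\mu 1=R(\mu,A_{G,M})N_l\,e^{-r\mu}$. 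Hence each entry $\langle\Xi(\mu)\Pi'_l(\mu)+\Xi(\mu)\Pi_l(\mu)e_\mu 1,\varphi^j_l\rangle$ of the matrix in Theorem~\ref{T.4} becomes the pairing of $\Xi(\mu)R(\mu,A_{G,M})$ with the combined input datum built from the instantaneous vector $B_l$ and the delayed vector $N_l$, the latter carrying the factor $e^{-r\mu}$.

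Next I would collapse the prefactor. Specializing Remark~\ref{S3.R5} with $D=0$ and $Le_\mu^X=e^{-r\mu}P$ gives $\Xi(\mu)=(I-e^{-r\mu}R(\mu,A_{G,M})P)^{-1}$, and factoring $R(\mu,A_{G,M})$ to the right produces the reduced resolvent $\Xi(\mu)R(\mu,A_{G,M})=(\mu-A_{G,M}-e^{-r\mu}P)^{-1}$ of the Example, well defined precisely on $\rho(A)\cap\rho(A_{G,M}+e^{-r\mu}P)$, which corresponds in the present setting to the spectral hypothesis $1\in\rho(MD_\mu)\cap\rho(De_\mu^X)$ of Theorem~\ref{T.4}. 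Absorbing this factor into $\Xi(\mu)$ turns every matrix entry into a pairing $\langle\Xi(\mu)(\,\cdot\,),\psi^j_i\rangle$ of exactly the form displayed in the statement, while the dimensions $d_i$ and the associated bases $(\psi^j_i)$ are those of the kernel spaces $\Upsilon_l(\mu)=\big(\Xi(\mu)D_\mu(I-MD_\mu)^{-1}K_l\big)^{\bot}$ of Theorem~\ref{T.4}, unaffected by the substitution since the boundary control operator $K$ is left unchanged.

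I expect the only genuine obstacle to be the careful bookkeeping of the Dirac operator $\delta_{-r}$ as it threads through the delay terms --- in particular the identity $B_{1,l}(e_\mu 1)=N_l e^{-r\mu}$ and, above all, the factorization of $\Xi(\mu)$ past $R(\mu,A_{G,M})$, which rests on $R(\mu,A_{G,M})(\mu-A_{G,M})=I$ on $D(A_{G,M})$ together with the boundedness of $P$ and $N$. Once these elementary computations are in place, the specialized rank condition coincides with the abstract criterion of Theorem~\ref{T.4}, and the Corollary follows.
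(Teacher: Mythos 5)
Your proposal is correct and follows exactly the paper's (implicit) proof: the corollary is obtained by specializing Theorem~\ref{T.4} to $D=0$, $K_0=K_1=0$, $L=P\delta_{-r}$, $B_1=N\delta_{-r}$, computing $\Pi'_l(\mu)=R(\mu,A_{G,M})B_l$ and $\Pi_l(\mu)e_\mu 1=e^{-r\mu}R(\mu,A_{G,M})N_l$, and collapsing the prefactor via Remark~\ref{S3.R5} to $\Xi(\mu)R(\mu,A_{G,M})=(\mu-A_{G,M}-e^{-r\mu}P)^{-1}$, which is precisely the specialization the paper invokes without further argument. Note that where your computation disagrees with the printed statement --- the delay factor $e^{-r\mu}$ belongs on $N_l$ rather than on $B_l$, and the operator in $\Xi(\mu)=(\mu I-A_{G,M}-e^{-r\mu}P)^{-1}$ should be $P$ rather than $M$ --- your version is the right one, since the system reads $Pz(t-r)+Nu(t-r)+Bu(t)$; the paper's display contains typos at these two points.
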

Note that the equation \eqref{state-inp} is a slight generalization of the one proposed in \cite[Theorem 4.2.6]{CZ}, where the system operator $ A_m $ and delay operators are just constant matrices. Thus, the result from the above corollary is more general.
\end{example}

\section{Application: Transport network system of neutral type and input delays}\label{sec:5}
Let us consider a finite, connected graph $ \mathsf{G}=(\mathsf{V},\mathsf{E}) $ and a flow on it (the latter is described by the differential equation \eqref{S5.1} below). The graph $ \mathsf{G} $ is composed by $ n\in \mathbb{N} $ vertices $ \alpha_{1},\,\ldots,\alpha_n $, and by $ m\in \mathbb{N} $ edges $ e_{1},\,\ldots,e_m $ which are assumed to be normalized on the interval $ [0,1] $. We shall denote the vertices at the endpoints of the edge $ e_{j} $ by $e_{j}(1)$ and $e_{j}(0)$, respectively, and assume that the particles flows from $e_{j}(1)$ to $e_{j}(0)$.

This section characterizes approximate controllability of the following system of
transport network system of neutral type and input delays:
\begin{align}\label{S5.1}
\begin{cases}
     \dfrac{\partial }{\partial t}\varrho^j(t,x)
    =c^j(x)\dfrac{\partial }{\partial x}\varrho^j(t,x)+q^j(x)\varrho^j(t,x) +\displaystyle\sum_{k=1}^{m}L_{jk}z^{k}(t+\cdot,\cdot),\; x\in (0,1), t\geq 0,\\
\varrho^j(0,x)= g^{j}(x),\;\; x\in (0,1), \\
      \mathsf{i}^{-}_{ij}c^{j}(1)\varrho^j(t,1)=
 \mathsf{w}_{ij}^{-}\displaystyle\sum_{k=1}^{m} \mathsf{i}^{+}_{ik}c^{k}(0)\varrho^j(t,0)
 +\sum_{l=1}^{n_0}\mathrm{k}_{il}v^{l}(t),\qquad\qquad\qquad t\geq 0,\\
      z^j(\theta,x)=\varphi^{j}(\theta,x),\; u^{j}(\theta)=\psi^{j}(\theta) ,\qquad\qquad\quad\qquad\qquad\qquad\;\;\theta\in[-r,0],x\in (0,1),\\
      \varrho^j(t,x)=\left[z^{j}(t,x)- \displaystyle\sum_{k=1}^{m}D_{jk}z^{k}(t+\cdot,\cdot)-\displaystyle\sum_{i=1}^{n}\mathsf{k}_{ij}u^{j}(t+\cdot)-\mathrm{b}_{ij}u^{j}(t)\right]
     \end{cases}
\end{align}
for $ i=1,\ldots,n$ and $ j=1,\ldots,m$ with $n,\,m\in \N$. Here, the coefficients $ \textsf{i}^{-}_{ij} $ and $ \textsf{i}^{+}_{ij} $ are the entries of the so-called the outgoing and incoming incidence matrix of $ G $ (denoted by $\mathcal{I}^{+}$ and $\mathcal{I}^{-}$), respectively, defined as
\begin{align*}
\textsf{i}^{-}_{ij}:=
\begin{cases}
1,\quad \text{if  } v_{i}=e_{j}(1),
\\
0,\quad \text{otherwise}.
\end{cases},
\textsf{i}^{+}_{ij}:=
\begin{cases}
1, \quad \text{if  } v_{i}=e_{j}(0),
\\
0, \quad \text{otherwise}.
\end{cases}
\end{align*}
The coefficients $ 0\leqslant\textsf{w}^{-}_{ij}$ determine the proportion of mass leaving vertex $ v_i $ into the edge $ e_j $ and define a graph matrix called the weighted outgoing incidence matrix of $ G $, denoted by $\mathcal{I}_{w}^{-}$. Moreover, we impose \emph{the Kirchhoff condition}
\begin{align}\label{kirch}
\sum_{j=1}^{m}\textsf{w}^{-}_{ij}=1,\; \forall i=1,\ldots,n.
\end{align}
Let $ X=L^{p}([0,1];\mathbb{C}^{m}) $, $ \partial X=\mathbb{C}^{n} $ and define the operator $ A_m $ as
$$(A_{m}g)^j(x):=c^{j}(x)\frac{d}{dx}g^j(x)+q^{j}(x).g^j(x) $$
with domain
\begin{align*}
g\in D(A_{m}):=\left\{ g =(g^{1},\ldots,g^{m})\in( W^{1,p}[0,1])^{m}:  g(1)\in \text{Rang}(I^{-}_{w})^{\top}\right\}.
\end{align*}
Moreover, we define the boundary operators $ G,M: D(A_m)\longrightarrow \partial X$ by
\begin{align}\label{S5.2}
Gf:=f(1)
, \qquad
Mf:=c^{-1}(1)\mathbb{B}c(0)f(0).
\end{align}
Clearly, $ G $ satisfies the assumptions \textbf{(A1)} and \textbf{(A2)}. Therefore, according to \cite[Theorem 3.6]{EHR2}, it follows that:
\begin{lemma}
Define the operators
\begin{align*}
A:=(A_{m})_{\vert _{\ker G}}, \;\;\;B=(\mu -A_{-1})G^{-1}_{ \vert_{\ker ( \mu -A_{m} )}}, \;\;\; C=M_{\vert_{D(A)}}, \; \; \;  \mu \in\mathbb{C}.
\end{align*}
Then the triple $ (A,B,C) $ satisfy the assumption \textbf{(A3)}.
\end{lemma}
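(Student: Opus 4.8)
The plan is to derive \textbf{(A3)} from \cite[Theorem 3.6]{EHR2}, which is tailored to the boundary control triple attached to a flow on a network of exactly this type and asserts regularity together with the identity as an admissible feedback. The task thus reduces to checking that the concrete data of \eqref{S5.1} — the transport operators $A_m$, the inflow trace $Gf=f(1)$, and the weighted-Kirchhoff output $Mf=c^{-1}(1)\mathbb{B}c(0)f(0)$, together with the velocities $c^j$ — meet the hypotheses of that theorem. Since $G$ maps onto $\partial X=\mathbb{C}^n$ and $A=(A_m)|_{\ker G}$ generates a $C_0$-semigroup (this is \textbf{(A1)}--\textbf{(A2)}, already recorded), it remains only to establish the admissibility, well-posedness, regularity and feedback content of \textbf{(A3)}.

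First I would make the underlying objects explicit, as this renders the verification transparent. The semigroup $T$ generated by $A$ is the transport flow with zero inflow at $x=1$; by finite propagation speed it is nilpotent, with $T(t)=0$ once $t$ exceeds the maximal edge-crossing time $\max_j\int_0^1 ds/c^j(s)$. The Dirichlet operator $D_\mu$ is found by solving on each edge the first-order problem $c^j\frac{d}{dx}(D_\mu v)^j+q^j(D_\mu v)^j=\mu (D_\mu v)^j$ with $(D_\mu v)^j(1)=v^j$, which admits an elementary closed form; this produces the boundary input operator $B=(\mu-A_{-1})D_\mu$ acting at $x=1$, whereas $C=M|_{D(A)}$ reads off the outflow trace at $x=0$.

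With these representations at hand, the admissibility requirements turn into boundary-trace estimates for the transport flow. I would verify that $\Phi_\tau u=\int_0^\tau T_{-1}(\tau-s)Bu(s)\,ds$ remains in $X=L^p([0,1];\mathbb{C}^m)$ (admissibility of $B$), that $\int_0^\tau\|CT(s)z\|^p\,ds\leq\gamma^p\|z\|^p$ for $z\in D(A)$ (admissibility of $C$, a hidden-regularity estimate for the outflow trace), and that the resulting input-output map $\mathbb{F}$ is bounded on $L^p$. These three estimates carry the genuine analytic weight, and I expect them to be the main obstacle were one to argue from scratch rather than quote \cite[Theorem 3.6]{EHR2}.

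The regularity and feedback assertions, in contrast, follow cheaply from the delay structure of the flow. Because the control enters at the inflow $x=1$ while the observation is taken at the outflow $x=0$, any signal needs a strictly positive minimal crossing time $t_0:=\min_j\int_0^1 ds/c^j(s)>0$ to traverse an edge; with zero initial data this forces $(\mathbb{F}u)(t)=0$ for $t<t_0$, so the truncated map $\mathbb{F}_\tau$ vanishes for every $\tau\leq t_0$. Consequently the feedthrough is zero, giving regularity, and $I_{\partial X}-\mathbb{F}_\tau=I_{\partial X}$ is trivially invertible for such $\tau$, so $I_{\partial X}$ is an admissible feedback. Collecting these facts shows that $(A,B,C)$ is a regular state-space system on $\partial X,X,\partial X$ with admissible feedback $I_{\partial X}$, i.e. \textbf{(A3)}, and the lemma follows via \cite[Theorem 3.6]{EHR2}.
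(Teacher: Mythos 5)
Your proposal is correct and, at its load-bearing core, identical to the paper's: the paper proves this lemma purely by invoking \cite[Theorem 3.6]{EHR2} (a companion paper), with no further argument, and your reduction to that theorem after noting that \textbf{(A1)}--\textbf{(A2)} are already in place is exactly what the authors do. What you add beyond the paper is a sketch of how one would verify \textbf{(A3)} directly, and this supplementary material is sound where it is actually carried out: the semigroup generated by $A=(A_m)|_{\ker G}$ is indeed nilpotent by finite propagation speed, $D_\mu$ has the closed exponential form (it is even displayed later in the paper's corollary on $R(\mu,\mathcal{A})$), and your dead-time argument is the right mechanism for the regularity and feedback claims --- since the input enters at $x=1$ and the observation is the trace at $x=0$, one has $(\mathbb{F}u)(t)=0$ for $t<t_0=\min_j\int_0^1 ds/c^j(s)$, so the feedthrough vanishes and $I_{\partial X}-\mathbb{F}_\tau=I$ is invertible for $\tau\le t_0$; by causality and time-invariance this invertibility propagates to all $\tau>0$, which is precisely the Weiss criterion for the identity to be an admissible feedback. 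The one place where your sketch stops short of a proof is the trio of admissibility/well-posedness estimates (admissibility of $B$, the hidden-regularity trace estimate for $C$, and $L^p$-boundedness of $\mathbb{F}$), which you only announce; you correctly flag these as carrying the analytic weight, and for them you, like the paper, ultimately lean on \cite[Theorem 3.6]{EHR2} --- for the transport flow they follow from the change of variables along characteristics, so this is a gap in detail rather than in substance.
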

So, we have.
\begin{lemma}
The operator
\begin{align}
\calA:=A_m,\; \; \; D(\calA)=\left\{f\in W^{1,p}([0,1],\mathbb{C}^{m}):f(1)=c^{-1}(1)\mathbb{B}c(0)f(0)\right\}.
\end{align}
generates a strongly continuous semigroups $ (T(t))_{t\geqslant 0} $ on $ X $, where $ \mathbb{B}:=(\mathcal{I}_{w}^{-})^{\top}\mathcal{I}^{+} $ is the weighted (transposed) adjacency matrix of the line graph (i.e., the graph obtained from $ \mathsf{G} $ by exchanging the role of the vertices and edges).
\end{lemma}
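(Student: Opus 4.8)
The plan is to identify $\calA$ with the abstract operator $A_{G,M}$ of Section~\ref{sec:2} and then invoke the generation result already obtained there. First I would observe that, with the boundary operators $G$ and $M$ from \eqref{S5.2}, the constraint defining $D(\calA)$ reads $f(1)=c^{-1}(1)\mathbb{B}c(0)f(0)$, which is exactly the equality $Gf=Mf$. Hence $D(\calA)=\{z\in D(A_m):\ Gz=Mz\}$ and $\calA=A_m$ on this domain, so that $\calA=A_{G,M}$ in the sense of Lemma~\ref{S3.R2}. The membership $f(1)\in\mathrm{Rang}\,(\mathcal{I}_w^-)^\top$ imposed in $D(A_m)$ is consistent with this, since $Mf=c^{-1}(1)(\mathcal{I}_w^-)^\top\mathcal{I}^+c(0)f(0)$ has its values prescribed by the range of $(\mathcal{I}_w^-)^\top$.

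Next I would check that the hypotheses \textbf{(A1)}--\textbf{(A3)} are in force for this concrete system. Assumption \textbf{(A3)} for the triple $(A,B,C)$ is precisely the content of the preceding lemma (obtained through \cite[Theorem 3.6]{EHR2}). For \textbf{(A1)} I would identify $A=(A_m)_{|\ker G}$ as the transport operator subject to the absorbing condition $f(1)=0$; because the material flows from $e_j(1)$ towards $e_j(0)$, this operator generates a (nilpotent) shift-type $C_0$-semigroup on $X=L^p([0,1];\mathbb{C}^m)$, a standard fact for first-order hyperbolic flows. For \textbf{(A2)} I would note that the trace $f\mapsto f(1)$ is surjective onto the boundary space, since any admissible boundary value can be realised by a $W^{1,p}$-function compatible with the range constraint built into $D(A_m)$.

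With \textbf{(A1)}--\textbf{(A3)} verified, the conclusion is immediate: by the generation statement used in the proof of Lemma~\ref{S3.R2} (namely \cite[Theorem 4.1]{HMR}), the operator $A_{G,M}$, and therefore $\calA$, generates a strongly continuous semigroup $(T(t))_{t\geqslant 0}$ on $X$. Finally I would record, as a matter of terminology only, that the $m\times m$ matrix $\mathbb{B}=(\mathcal{I}_w^-)^\top\mathcal{I}^+$ is the weighted (transposed) adjacency matrix of the line graph; this combinatorial identification plays no role in the generation argument. I do not anticipate any serious obstacle, since the essential analytic work was carried out abstractly in Section~\ref{sec:2}; the only genuine verifications are the routine checks of \textbf{(A1)} and \textbf{(A2)}, of which the surjectivity of the boundary trace under the range constraint of $D(A_m)$ is the most delicate, though it remains standard for network transport operators.
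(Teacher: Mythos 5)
Your argument is correct, but it is not the route the paper takes: the paper's entire proof of this lemma is a citation to an external result, \cite[Theorem 3.6]{EHR2}, which directly treats the transport (semi)group on the network, whereas you re-derive the statement internally by identifying $\calA$ with the abstract operator $A_{G,M}$ and invoking the generation machinery of Sections~\ref{sec:2}--\ref{sec:3} (i.e.\ the assertion, made in the proof of Lemma~\ref{S3.R2} via \cite[Theorem 4.1]{HMR}, that under \textbf{(A1)}--\textbf{(A3)} the operator $A_{G,M}=A_m$ with $D(A_{G,M})=\{z:\,Gz=Mz\}$ generates a $C_0$-semigroup on $X$). Your deduction is legitimate because the section has already put all the hypotheses in place: the text asserts that $G$ from \eqref{S5.2} satisfies \textbf{(A1)} and \textbf{(A2)}, and the lemma immediately preceding the statement supplies \textbf{(A3)}, including admissibility of the identity feedback; given these, your identification $Gf=Mf\Leftrightarrow f(1)=c^{-1}(1)\mathbb{B}c(0)f(0)$ closes the argument. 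What your approach buys is self-containedness: the generation follows from the feedback-theoretic framework the paper itself develops, rather than from an unpublished companion paper (\cite{EHR2} is listed as ``in preparation''). What the paper's citation buys is that the delicate concrete verifications are absorbed into the reference --- precisely the points you flag as ``routine'': the surjectivity in \textbf{(A2)} in fact requires the identification of $\partial X=\mathbb{C}^{n}$ with ${\rm Rang}\,(\mathcal{I}_w^-)^\top\subset\mathbb{C}^{m}$, which uses the injectivity of $(\mathcal{I}_w^-)^\top$ guaranteed by the Kirchhoff condition \eqref{kirch}, and \textbf{(A1)} requires checking that the variable-coefficient transport operator with the absorbing condition $f(1)=0$ generates (a weighted, eventually vanishing shift, not literally a nilpotent shift when $c^j$ and $q^j$ are nonconstant). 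Neither point invalidates your proof --- the paper itself dismisses them with ``Clearly'' --- but they are where the actual work hides, and your write-up would be strengthened by making the rank-$n$ property of $(\mathcal{I}_w^-)^\top$ explicit.
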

\begin{proof}
For the proof of this result we refer to \cite[Theorem 3.6]{EHR2}.
\end{proof}

Moreover, we obtain.
\begin{corollary}
For $ \mu \in \rho(\calA)$, we have
\begin{align*}
R(\mu ,\mathcal{A})=(I+D_{\mu}(I_{\mathbb{C}^{n}}-\mathbb{A}_{\mu})^{-1}M)R(\mu,A),
\end{align*}
where
\begin{align*}
 (D_{\mu}v)(x)&=\text{diag}\left(e^{\xi^{j}(x,1)-\mu \tau^{j}(x,1)}\right) v,\; \; Mg:=c(1)^{-1}\mathbb{B}c(0)g(0) \\
 (R(\mu,A)f)^{j}(x)&=\int_{x}^{1} e^{\xi^{j}(x,y)-\mu \tau^{j}(x,y)}c^j(y)f^{j}(y)dy
\end{align*}
for $ g\in D(A_m) $, $v\in \mathbb{C}^n,\, f\in \mathbb{C}^m ,\, x\in[0,1]$ and
\begin{align*}
(\mathbb{A}_{\mu})_{ip}=
\begin{cases}
\mathsf{w}_{pj}^{-}e^{\xi_{j}(0,1)-\mu \tau_{j}(0,1)},& \text{if  } v_{i}=e_{j}(0) \text{ and }
 v_{p}=e_{j}(1),
\\
0, &\text{otherwise}.
\end{cases}
\end{align*}
\end{corollary}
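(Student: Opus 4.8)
The plan is to deduce the formula from the abstract resolvent representation already proved in Lemma \ref{S3.R2}, and then to make each operator occurring there explicit for the transport operator at hand. First I would observe that the operator $\calA$ of this section is precisely the operator $A_{G,M}$ of Section \ref{sec:2} attached to the maximal operator $A_m$ and the trace operators $G,M$ of \eqref{S5.2}: indeed the boundary condition $f(1)=c^{-1}(1)\mathbb{B}c(0)f(0)$ defining $D(\calA)$ is exactly $Gf=Mf$. Since $G$ satisfies \textbf{(A1)}--\textbf{(A2)} and $(A,B,C)$ satisfies \textbf{(A3)}, Lemma \ref{S3.R2} applies and gives, for $\mu\in\rho(A)$ with $1\in\rho(MD_\mu)$,
$$R(\mu,\calA)=\bigl(I+D_\mu(I_{\mathbb{C}^{n}}-MD_\mu)^{-1}M\bigr)R(\mu,A).$$
It then remains to compute the three ingredients $D_\mu$, $R(\mu,A)$ and $MD_\mu$ explicitly, and to identify $MD_\mu$ with the matrix $\mathbb{A}_\mu$ so that $I_{\mathbb{C}^{n}}-MD_\mu=I_{\mathbb{C}^{n}}-\mathbb{A}_\mu$.

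To obtain the Dirichlet operator I would solve, componentwise, the homogeneous boundary value problem $(\mu-A_m)h=0$, $Gh=v$, that is $\mu h^{j}-c^{j}(h^{j})'-q^{j}h^{j}=0$ on $(0,1)$ with $h^{j}(1)=v_j$. This is a scalar first--order linear equation along the characteristic, and integrating it from $x$ to $1$ yields $h^{j}(x)=e^{\xi^{j}(x,1)-\mu\tau^{j}(x,1)}v_j$ with $\xi^{j}(x,y):=\int_{x}^{y}q^{j}(s)/c^{j}(s)\,ds$ and $\tau^{j}(x,y):=\int_{x}^{y}ds/c^{j}(s)$, which is exactly the stated diagonal form of $D_\mu$. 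For the resolvent $R(\mu,A)$ of the generator $A=(A_m)_{\vert_{\ker G}}$ I would solve the inhomogeneous problem $(\mu-A_m)f=g$ subject to $f(1)=0$; variation of constants along the same characteristics produces the integral kernel $e^{\xi^{j}(x,y)-\mu\tau^{j}(x,y)}$ and hence the displayed formula for $R(\mu,A)$.

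The decisive and most delicate step is the identification $MD_\mu=\mathbb{A}_\mu$. Here I would evaluate $MD_\mu v=c(1)^{-1}\mathbb{B}\,c(0)\,(D_\mu v)(0)$ using $(D_\mu v)(0)=\mathrm{diag}\bigl(e^{\xi^{j}(0,1)-\mu\tau^{j}(0,1)}\bigr)v$, and then unwind the combinatorial structure of $\mathbb{B}=(\mathcal{I}_{w}^{-})^{\top}\mathcal{I}^{+}$ together with the diagonal boundary factors $c(0)$ and $c(1)^{-1}$. Reading off the $(i,p)$ entry, the incidence relations force $v_i=e_j(0)$ and $v_p=e_j(1)$ for a common edge $e_j$, the weight $\mathsf{w}^{-}_{pj}$ is carried by $\mathcal{I}_{w}^{-}$, and the exponential factor $e^{\xi_j(0,1)-\mu\tau_j(0,1)}$ is supplied by $D_\mu$, producing precisely the entries defining $\mathbb{A}_\mu$. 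In tandem I would record the spectral equivalence $\mu\in\rho(\calA)\Leftrightarrow 1\in\rho(MD_\mu)\Leftrightarrow 1\in\rho(\mathbb{A}_\mu)$ coming from Lemma \ref{S3.R2}, which is what makes the inverse $(I_{\mathbb{C}^{n}}-\mathbb{A}_\mu)^{-1}$ legitimate.

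The main obstacle I anticipate is the bookkeeping in this last entrywise computation: one must verify that the boundary coefficients $c(0)$ and $c(1)^{-1}$ combine with the incidence data exactly as the definition of $\mathbb{A}_\mu$ prescribes, so that no spurious $c$--factors survive and the nonzero pattern matches the edge-adjacency of the line graph. Everything else is either an appeal to Lemma \ref{S3.R2} or a routine first-order ODE computation. Once $MD_\mu=\mathbb{A}_\mu$ is confirmed, substituting the three explicit ingredients into the abstract representation from Lemma \ref{S3.R2} gives the claimed formula and completes the proof.
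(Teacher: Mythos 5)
The paper does not actually prove this corollary: its entire ``proof'' is the citation to \cite[Corollary 3.8]{EHR2}, a companion paper listed as in preparation. Your proposal therefore does more than the paper does, by reconstructing the argument internally, and the route you choose --- identify $\calA$ with $A_{G,M}$ for the operators $A_m,G,M$ of \eqref{S5.2}, invoke the resolvent identity $R(\mu,A_{G,M})=\bigl(I+D_\mu(I_{\partial X}-MD_\mu)^{-1}M\bigr)R(\mu,A)$ and the equivalence $\mu\in\rho(A_{G,M})\Leftrightarrow 1\in\rho(MD_\mu)$ from (the proof of) Lemma \ref{S3.R2}, i.e.\ from \cite[Theorem 4.1]{HMR}, then compute $D_\mu$, $R(\mu,A)$ and $MD_\mu$ by integrating scalar first-order ODEs along each edge --- is the standard derivation in this literature and, as far as one can tell, exactly what the cited reference does. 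What your version buys is self-containedness; what the citation buys the authors is avoiding the bookkeeping that you correctly single out as the only real content of the proof.

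Two caveats on execution, both traceable to looseness in the statement itself rather than to your plan. First, with the only natural definitions $\xi^{j}(x,y)=\int_x^y q^{j}(s)/c^{j}(s)\,ds$ and $\tau^{j}(x,y)=\int_x^y ds/c^{j}(s)$ (the paper never defines $\xi^{j},\tau^{j}$), your variation-of-constants step yields $(R(\mu,A)f)^{j}(x)=\int_x^1 e^{\xi^{j}(x,y)-\mu\tau^{j}(x,y)}\,f^{j}(y)/c^{j}(y)\,dy$; a sanity check with $q\equiv 0$ and constant $c$ (solve $\mu f-cf'=g$, $f(1)=0$, giving $f(x)=c^{-1}\int_x^1 e^{-\mu(y-x)/c}g(y)\,dy$) confirms the factor $(c^{j}(y))^{-1}$, so the displayed $c^{j}(y)f^{j}(y)$ is a typo under these conventions, and executing your step faithfully would expose it rather than reproduce the formula --- you should say so rather than assert agreement. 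Second, the identification $MD_\mu=\mathbb{A}_\mu$ cannot be a purely entrywise reading of $c(1)^{-1}\mathbb{B}\,c(0)\,\mathrm{diag}\bigl(e^{\xi^{j}(0,1)-\mu\tau^{j}(0,1)}\bigr)$: that object is $m\times m$, while $\mathbb{A}_\mu$ is $n\times n$ (its indices $i,p$ run over vertices). You must first parametrize the boundary space $\partial X=\mathbb{C}^{n}$ via the domain constraint $g(1)\in\mathrm{Rang}\,(\mathcal{I}^{-}_{w})^{\top}$, writing outgoing boundary data as $(\mathcal{I}^{-}_{w})^{\top}w$ with $w\in\mathbb{C}^{n}$ and collecting incoming data through $\mathcal{I}^{+}$; only then does the $(i,p)$ entry collapse to $\mathsf{w}^{-}_{pj}e^{\xi_{j}(0,1)-\mu\tau_{j}(0,1)}$ when $v_i=e_j(0)$ and $v_p=e_j(1)$. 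Whether the factors $c^{j}(0)$ and $c^{j}(1)^{-1}$ cancel exactly, as the stated $\mathbb{A}_\mu$ requires, depends on whether vertex boundary values are taken as traces or as fluxes --- another convention the paper leaves unfixed --- so pin these conventions down before carrying out what you call the decisive step.
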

\begin{proof}
A proof of this lemma can be found in \cite[Corollary 3.8]{EHR2}.
\end{proof}
On the other hand, in order to apply the results of the previous sections, let us assume that
\begin{align*}
D_{jk}(g^{k})&=\int_{-r}^{0}d\eta_{jk}(\theta)g^k(\theta),\; L_{jk}(g^{k})=\int_{-r}^{0}d\gamma_{jk}(\theta)g^k(\theta),\\
 \mathsf{k}_{ij}(f^{j})&=\int_{-r}^{0}d\vartheta_{ij}(\theta)f^j(\theta),
\end{align*}
for $ g\in W^{1,p}([-r,0],X) $ and $ f\in W^{1,p}([-r,0],\mathbb{C}^{n}) $, where $ \eta,\gamma:[-r,0]\longrightarrow \mathcal{L}(X) $ and $\vartheta:[-r,0]\longrightarrow \mathcal{L}(\C^n,X) $ are functions of bounded variations continuous at zero with $ \eta(0)=\gamma(0)=\vartheta(0)=0 $. Thus the system \eqref{S5.1} is rewritten in the form \eqref{Sy.1} with $ D=(D_{jk})_{m\times m} $, $ L=(L_{jk})_{m\times m} $, $ K_{0}=(\mathsf{b}_{ij})_{m\times n} $, $ K_{1}=(\mathsf{k}_{ij})_{m\times n} $ and $ B_{1}=B_0\equiv 0 $.

Therefore, according to Theorem \ref{S3.T1}, the transport network system of neutral type \eqref{S5.1} is well-posed.
 \begin{corollary}
The operator $ (\mathfrak{A},D(\mathfrak{A})) $ defined by
\begin{align*}
\mathfrak{A}&=\left(\begin{smallmatrix}
\calA & L & 0\\
0 & Q_{m}^{X} & 0\\
0 & 0 & Q_{m}^{U}
\end{smallmatrix}\right),\\
 D(\mathfrak{A})&=\left\{\left(\begin{smallmatrix}
 f \\ \varphi \\ \psi
\end{smallmatrix}\right)\in D(A_{m})\times W^{1,p}([-r,0],X)\times D(Q^{U}):\; \left(\begin{smallmatrix}
f(1)=c^{-1}(1)\mathbb{B}c(0)f(0) \\ \varphi(0)=f+D\varphi+K_1 \psi
\end{smallmatrix}\right)\right\}.
\end{align*}
generates a strongly continuous semigroups $ (\mathfrak{U}(t))_{t\geqslant 0} $ on $ X\times L^{p}([-r,0];X)\times  L^{p}([-r,0];\mathbb{C}^{n_0}) $.
\end{corollary}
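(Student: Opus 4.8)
The plan is to recognize the corollary as a direct specialization of Theorem \ref{S3.T1} to the concrete operators attached to the network \eqref{S5.1}, so that the whole argument reduces to checking that the abstract hypotheses \textbf{(A1)}--\textbf{(A3)} together with the standing assumptions on the delay operators hold in this setting.

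First I would record the identification already made in the text: with $X=L^{p}([0,1];\mathbb{C}^{m})$, the operator $A_m$, and the trace operators $G,M$ given in \eqref{S5.2}, the network system \eqref{S5.1} is precisely the abstract problem \eqref{Sy.1} for the choice $D=(D_{jk})$, $L=(L_{jk})$, $K_0=(\mathsf{b}_{ij})$, $K_1=(\mathsf{k}_{ij})$ and $B_0=B_1=0$. Under this dictionary the operator $\mathcal{A}_{G,M}$ of Section \ref{sec:2} becomes exactly the matrix displayed in the corollary --- the vanishing of $B_1$ annihilating its $(1,3)$ entry --- and its domain encodes the boundary relation $G\varrho_0=M\varrho_0$ as $f(1)=c^{-1}(1)\mathbb{B}c(0)f(0)$, together with the neutral coupling $\varphi(0)=f+D\varphi+K_1\psi$.

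Next I would verify the three structural hypotheses. Assumptions \textbf{(A1)} and \textbf{(A2)} hold because $G=f\mapsto f(1)$ is surjective and $A=(A_m)_{\vert_{\ker G}}$ generates the transport semigroup $(T(t))_{t\ge 0}$, as recorded in the lemmas preceding the corollary; assumption \textbf{(A3)} --- that the triple $(A,B,C)$ is a regular state-space system admitting $I_{\partial X}$ as admissible feedback --- is precisely the content of the lemma invoking \cite[Theorem 3.6]{EHR2}. In parallel I would confirm that the delay operators meet the standing requirements of Section \ref{sec:2}: since $\eta,\gamma,\vartheta$ are of bounded variation, continuous at zero with $\eta(0)=\gamma(0)=\vartheta(0)=0$, their total variations on $[-\varepsilon,0]$ vanish as $\varepsilon\to 0$, so $D,L$ and $K_1$ are well defined on the corresponding $W^{1,p}$-spaces and are admissible observation operators for the shift generators $Q^{X},Q^{U}$ in the sense used there.

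With these verifications in place the conclusion is immediate: Theorem \ref{S3.T1} applies verbatim and yields a strongly continuous semigroup $(\mathfrak{U}(t))_{t\ge 0}$ generated by $\mathfrak{A}=\mathcal{A}_{G,M}+\mathcal{P}$ on $\mathcal{X}=X\times L^{p}([-r,0];X)\times L^{p}([-r,0];\mathbb{C}^{n_0})$. The only genuinely nontrivial ingredient is \textbf{(A3)}, the regularity of the boundary triple $(A,B,C)$ with admissible unit feedback; every other step is bookkeeping that matches the concrete data of \eqref{S5.1} to the abstract template. Since that regularity is supplied through the preceding lemma by \cite[Theorem 3.6]{EHR2}, no further work remains beyond the identification above.
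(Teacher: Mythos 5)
Your proposal is correct and takes essentially the same route as the paper: the corollary there is likewise obtained by specializing Theorem \ref{S3.T1} to the network data ($X=L^{p}([0,1];\mathbb{C}^{m})$, $Gf=f(1)$, $M$ the weighted Kirchhoff trace, $D=(D_{jk})$, $L=(L_{jk})$, $K_{1}=(\mathsf{k}_{ij})$, $B_{0}=B_{1}\equiv 0$), with \textbf{(A1)}--\textbf{(A2)} checked directly for the point-evaluation boundary operator and \textbf{(A3)} supplied by the preceding lemma via \cite[Theorem 3.6]{EHR2}. Your extra check that the bounded-variation kernels $\eta,\gamma,\vartheta$ (continuous and vanishing at zero) make $D,L,K_{1}$ admissible for the shift generators is exactly the paper's standing hypothesis from Section \ref{sec:2}, so no gap remains.
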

In this case, for $1\in \rho(\mathbb{A}_\mu)\cap\rho(De_\mu) $, we have
$$\Xi(\mu)=(I-De_{\mu}-R(\mu,A)Le_{\mu}-D_{\mu}(I_{\mathbb{C}^{n}}-\mathbb{A}_{\mu})^{-1}MR(\mu,A)Le_{\mu})^{-1}.$$
The fact that the transport network system of neutral type \eqref{S5.1} is approximately controllable follows from the following result:
\begin{corollary}
Let $1\in \rho(\mathbb{A}_\mu)\cap\rho(De_\mu) $, then the system \eqref{S5.1} is approximately controllable  if and only if the following matrix is of rank $d_i$
\begin{align*}
\sum_{j=1}^{m}\left(\begin{smallmatrix}
\big\langle \Xi(\mu)\big(\mathrm{b}_{1j}+\mathsf{k}_{1j}e_\mu1 \big),\varphi^{1}_{i}\big\rangle  &\cdots  & \big\langle \Xi(\mu)\big(\mathrm{b}_{1j}+\mathsf{k}_{1j}e_\mu1\big),\varphi^{d_{i}}_{i}\big\rangle\\
	\big\langle \Xi(\mu)\big(\mathrm{b}_{2j}+\mathsf{k}_{2j}e_\mu1\big),\varphi^{1}_{i}\big\rangle   &\cdots &  \big\langle \Xi(\mu)\big(\mathrm{b}_{2j}+\mathsf{k}_{2j}e_\mu1\big),\varphi^{d_{i}}_{i}\big\rangle \\
	\vdots  & &  \vdots \\
	\big\langle \Xi(\mu)\big(\mathrm{b}_{nj}+\mathsf{k}_{nj}e_\mu1\big),\varphi^{1}_{i}\big\rangle  & \ldots & \langle \Xi(\mu)\big(\mathrm{b}_{nj}+\mathsf{k}_{nj}e_\mu1\big),\varphi^{d_{i}}_{i}\big\rangle,
	\end{smallmatrix}\right), \; \; i=1,\ldots,n.
\end{align*}
Here $ d_{i}$ denote the dimension of $\left(\displaystyle\sum_{l=1}^{n_0}\Xi(\mu) D_{\mu}(I_{\mathbb{C}^{n}}-\mathbb{A}_{\mu})^{-1}\mathrm{k}_{il}\right)^{\bot} $ and $ (\varphi^{1}_{i},\varphi^{2}_{i},\ldots,\varphi^{d_{i}}_{i} )$ denote the associated basis, for $ i=1,\ldots,n $.
\end{corollary}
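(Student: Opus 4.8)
The plan is to derive this corollary as a direct specialization of Theorem \ref{T.4} to the concrete data of the transport network. The reduction of \eqref{S5.1} to the abstract neutral system \eqref{Sy.1} has already been carried out above, with delay operators $D=(D_{jk})$, $L=(L_{jk})$, input-delay operator $K_1=(\mathsf{k}_{ij})$, instantaneous input operator $K_0=(\mathrm{b}_{ij})$, boundary control $K=(\mathrm{k}_{il})$, and $B_0=B_1\equiv 0$. The hypotheses \textbf{(A1)}--\textbf{(A3)} have been verified for the triple $(A,B,C)$ by the lemmas of this section (after \cite[Theorem 3.6]{EHR2}), and the control space is finite dimensional, so Theorem \ref{T.4} applies once its abstract quantities are matched with the present ones.

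First I would simplify the operators $\Pi_l$ and $\Pi'_l$ of Theorem \ref{T.4}. Because $B_0=B_1\equiv 0$, the definitions $\Pi_l(\mu)=R(\mu,A_{G,M})B_{1,l}+K_{1,l}$ and $\Pi'_l(\mu)=R(\mu,A_{G,M})B_{0,l}+K_{0,l}$ collapse to $\Pi_l(\mu)=K_{1,l}$ and $\Pi'_l(\mu)=K_{0,l}$; hence the generic entry $\Xi(\mu)\Pi'_l(\mu)+\Xi(\mu)\Pi_l(\mu)e_\mu 1$ becomes $\Xi(\mu)\bigl(K_{0,l}+K_{1,l}e_\mu 1\bigr)$, which in concrete coordinates reads $\Xi(\mu)\bigl(\mathrm{b}_{\cdot j}+\mathsf{k}_{\cdot j}e_\mu 1\bigr)$ once the columns of $K_0$ and $K_1$ are identified. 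Next I would match the abstract resolvent data with those furnished by the preceding corollary: the explicit formulas for $M$ and $D_\mu$ give $MD_\mu=\mathbb{A}_\mu$, so $\rho(MD_\mu)=\rho(\mathbb{A}_\mu)$, $A_{G,M}=\mathcal{A}$, and $(I-MD_\mu)^{-1}=(I_{\mathbb{C}^{n}}-\mathbb{A}_\mu)^{-1}$. Inserting the expansion $R(\mu,A_{G,M})=\bigl(I+D_\mu(I_{\mathbb{C}^{n}}-\mathbb{A}_\mu)^{-1}M\bigr)R(\mu,A)$ into the definition $\Xi(\mu)=(I-De_\mu-R(\mu,A_{G,M})Le_\mu)^{-1}$ of Remark \ref{S3.R5} reproduces exactly the four-term formula for $\Xi(\mu)$ displayed above the statement.

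Finally I would translate the orthogonality datum and read off the rank matrix. The orthogonal-complement datum $\Upsilon_l(\mu)$ of Theorem \ref{T.4} becomes $\bigl(\sum_{l=1}^{n_0}\Xi(\mu)D_\mu(I_{\mathbb{C}^{n}}-\mathbb{A}_\mu)^{-1}\mathrm{k}_{il}\bigr)^{\bot}$ after the boundary operator $K$ is expanded through its coefficients $\mathrm{k}_{il}$; its dimension is the $d_i$ of the statement, with basis $(\varphi^1_i,\ldots,\varphi^{d_i}_i)$. Substituting the reduced entries above and collecting the summation over the edge index $j=1,\ldots,m$ that arises from the matrix structure of the delay operators turns the rank matrix of Theorem \ref{T.4} into the one asserted here, and the equivalence \eqref{ker}$\Leftrightarrow$\eqref{Rank} proved there yields the stated criterion. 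The main obstacle is the index bookkeeping: one must keep the vertex index $i\in\{1,\ldots,n\}$ and the edge index $j\in\{1,\ldots,m\}$ carefully apart, and verify the entrywise identity $MD_\mu=\mathbb{A}_\mu$, which rests on the exponential factors $e^{\xi_j(0,1)-\mu\tau_j(0,1)}$ together with the weighted incidence structure $\mathbb{B}=(\mathcal{I}_{w}^{-})^{\top}\mathcal{I}^{+}$ encoding the Kirchhoff condition \eqref{kirch}.
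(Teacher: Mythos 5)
Your proposal is correct and follows essentially the same route as the paper: the paper's proof is precisely a one-line specialization of Theorem \ref{T.4} with the identifications $\Pi_i$, $\Pi'_i$, and $\Upsilon_i(\mu)$ coming from $B_0=B_1\equiv 0$, $K_0=(\mathrm{b}_{ij})$, $K_1=(\mathsf{k}_{ij})$, and $MD_\mu=\mathbb{A}_\mu$, which is exactly what you carry out. Your version merely spells out the bookkeeping (the collapse of $\Pi_l$, $\Pi'_l$, the resolvent identity $R(\mu,A_{G,M})=\bigl(I+D_\mu(I_{\mathbb{C}^{n}}-\mathbb{A}_\mu)^{-1}M\bigr)R(\mu,A)$, and the expansion of $K$ through its coefficients $\mathrm{k}_{il}$) that the paper leaves implicit.
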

\begin{proof}
The statements follows from Theorem \ref{T.4} with
$$ \Pi_{i}(\mu)=\sum_{j=1}^{m} \Xi(\mu)\mathsf{k}_{ij}, \; \Pi_{i}'\mu)=\sum_{j=1}^{m} \Xi(\mu)\mathrm{b}_{ij},\; \Upsilon_{i}(\mu)= \left(\displaystyle\sum_{l=1}^{n_0}\Xi(\mu) D_{\mu}(I_{\mathbb{C}^{n}}-\mathbb{A}_{\mu})^{-1} \mathrm{k}_{il}\right)^{\bot}.$$
\end{proof}


%
%



\end{document}